\numberwithin{equation}{section}
\newtheorem{dfn}{Definition}[section]
\newtheorem{propo}[dfn]{Proposition}
\newtheorem{theo}[dfn]{Theorem}
\newtheorem{cor}[dfn]{Corollary}
\newtheorem{lem}[dfn]{Lemma}
\theoremstyle{definition}
\theoremstyle{remark}
\newtheorem{rem}[dfn]{Remark}
\newcommand{\norm}[1]{\left\lVert #1 \right\rVert}
\newcommand{\ens}[1]{\left\{ #1\right\}}
\newcommand{\R}{\mathbb{R}}
\newcommand{\Z}{\mathbb{Z}}
\newcommand{\abs}[1]{\left|#1\right|}
\newcommand{\itlog}[1]{#1\log\log #1}  
\providecommand{\newoperator}[3]{%
  \newcommand*{#1}{\mathop{#2}#3}}
\newoperator{\re}{\mathrm{Re}}{\,}
\newoperator{\im}{\mathrm{Im}}{\,}
\begin{document}

\title{Integrability conditions on coboundary and transfer function for limit theorems}

\author{Davide Giraudo }

\address{Normandie Universit\'e, Universit\'e de Rouen, 
Laboratoire de Math\'ematiques Rapha\"el Salem,
CNRS, UMR 6085, Avenue de l'universit\'e, BP 12, 
76801 Saint-Etienne du Rouvray Cedex, 
France.}

\email{davide.giraudo1@univ-rouen.fr}

\date{\today}

\keywords{Invariance principle, strictly stationary process}

\subjclass[2010]{60F17; 60G10}

 \begin{abstract}
  For a measure preserving automorphism $T$ of a probability space, we provide 
  conditions on the tail function of $g\colon\Omega\to\R$ 
  and $g-g\circ T$ which guarantee limit theorems among the weak invariance principle, 
  Marcinkievicz-Zygmund strong law of large numbers and the law of the iterated logarithm 
  to hold for $f:=m+g-g\circ T$, where $(m\circ T^i)_{i\geqslant 0}$ 
  is a martingale differences sequence.
 \end{abstract}

\maketitle 

\section{Introduction and notations}
 
Let $(\Omega,\mathcal F,\mu)$ be a probability space and $T\colon \Omega\to \Omega$ 
be a bijective bi-measurable and measure preserving map. We assume that the dynamical 
system is ergodic (that is, if $T^{-1}A=A$ for some $A\in\mathcal F$, then 
$\mu(A)\in \ens{0,1}$).
If $n\geqslant 1$ is an integer and $f\colon\Omega\to\R$, we denote $S_n(f):=
\sum_{j=0}^{n-1}f\circ T^j$ and for a fixed $t$, define 
\begin{equation}
 S_n^{\mathrm{pl} }(f,t):=S_{\left[nt\right]}(f)+(nt-\left[nt\right])  
 f\circ T^{\left[nt\right]}, t\in [0,1], 
\end{equation}
 where $\left[x\right]$ denote the integer part of the real number $x$.
Then for each $\omega\in \Omega$ and each integer $n\geqslant 1$, 
the map $t\mapsto S_n^{\mathrm{pl} }(f,t)$ is 
an element of the space of continuous functions on $[0,1]$, denoted 
by $C[0,1]$.  
 
Let us state the limit theorems we are interested in. 

\begin{dfn}Let $f\colon\Omega\to\R$ be a measurable function.
\begin{itemize}
 \item We say that the function $f$ satisfies the invariance principle if the 
 sequence $(n^{-1/2}S_n^{\mathrm{pl} }(f,\cdot) )_{n\geqslant 1}$ weakly 
 converges in the space $C[0,1]$ endowed with the topology of uniform 
 convergence to a scalar multiple of a standard Brownian motion.
 \item We say that the function $f$ satisfies the law of the iterated logarithm 
 if there exists a constant $C(f)$ such that for almost every $\omega\in\Omega$, 
 \begin{equation}\label{eq:def_LIL}
  \limsup_{n\to +\infty }\frac{S_n(f)(\omega)}{\sqrt{\itlog n}}=C(f)\mbox{ and }  
  \liminf_{n\to +\infty }\frac{S_n(f)(\omega)}{\sqrt{\itlog n}}=-C(f).  
 \end{equation}
 \item We say that the function $f$ satisfies the functional law of the
 iterated logarithm if the sequence $\left( (\sqrt{\itlog{n}})^{-1}
 S_n^{\mathrm{pl} }(f,\cdot)\right)_{n\geqslant 1} $ is relatively 
 compact and the set of its limit points coincides with the set of all 
 absolutely continuous functions $x\in C[0,1]$ such that $x(0)=0$ and 
 $\int_0^1 (x'(t))^2\mathrm dt\leqslant 1$, where $x'$ denotes the 
 derivative of $x$ defined almost everywhere with respect to the Lebesgue measure.
 \item Let $1<p<2$. We say that the function $f$ satisfies the 
 $p$-strong law of large numbers if for any $\alpha\in[1/p,1]$, 
 the following holds:
 \begin{equation}\label{Baum-Katz}
 \forall\varepsilon>0,\quad   \sum_{n=1}^{+\infty}n^{\alpha p-2}
 \mu\ens{\max_{1\leqslant k\leqslant n} 
 \abs{S_k(f)}\geqslant\varepsilon n^\alpha }< +\infty. 
 \end{equation}
\end{itemize}
\end{dfn}

If it is possible to find a decomposition of the function $f$ 
\begin{equation}\label{decomposition_f}
 f=m+g- g\circ T,
\end{equation}
where $g\colon\Omega\to\R$ is a measurable function and $m$ 
satisfies one of the previous limit theorems, then one can wonder 
if we can deduce the result for $f$. 

A known situation is when the sequence $(m\circ T^i)_{i\geqslant 0}$ is 
a square-integrable martingale differences sequence. A necessary and 
sufficient condition to have \eqref{decomposition_f} with such an $m$ and 
a square integrable $g$ is known (see Theorem~2 in \cite{MR1198662}). 
If $(m\circ T^i)_{i\geqslant 0} $ is a square-integrable martingale 
difference sequence, then the functional law of the iterated logarithm and 
the invariance principle take place. 

If $1<p<2$ and $m\in\mathbb L^p$, then Theorem~5 by Dedecker and 
Merlev\`ede \cite{MR2743029} 
implies that $m$ satisfies the $p$-strong law of large numbers.
Actually, their results holds in a more general setting than 
strictly stationary sequences, as they only require a stochastic 
domination on the considered martingale differences sequence. 
A similar result as \eqref{Baum-Katz} takes place for $\alpha=1$
if we require a conditional stochastic domination (see \cite{benoist_quint}, 
Theorem~2.2). A necessary and sufficient condition 
for \eqref{decomposition_f} to hold with $m,g\in\mathbb L^p$, $1<p<2$,  is given by Voln\'y 
in Theorem~1 of \cite{MR2239087}, and in this case, \eqref{Baum-Katz} is 
satisfied (see Theorem~6 of \cite{MR2743029}).

We call a \textit{coboundary} a function of the form $g-g\circ T$, where 
$g\colon\Omega\to \R$ is a measurable function. The function $g$ is called 
a \textit{transfer function}. The following result is Theorem~1 
of \cite{MR1893125}. It gives a necessary and sufficient condition 
on the transfer function to preserve the 
limit theorems mentioned in the previous definition. Sufficiency for the 
invariance principle and the law of the iterated logarithm 
was established in \cite{MR624435}, pages 140-142).

 \begin{theo}[The equivalence theorem,\cite{MR1893125}]\label{equivalence_theorem}
 Let us suppose that for the process $(m\circ T^i)_{i \in \Z}$ the invariance 
 principle, the law of the iterated logarithm (functional law of the iterated 
 logarithm) respectively, holds true. Let $g$ be a measurable function and 
 \begin{equation}
  f=m+g - g\circ T.
 \end{equation}
 Then for the process $(f\circ T^i)_{i \in \Z}$ 
 \begin{itemize} 
  \item the invariance principle holds if and only if 
  \begin{equation}\label{equivalence_WIP}
   \frac 1{\sqrt n}\max_{1\leqslant k\leqslant n}\abs{g\circ T^k}\underset{n\to \infty}{\to}  0
   \mbox{ in probability;} 
  \end{equation}
  \item the law of the iterated logarithm as well as the functional law of the iterated logarithm 
  holds if and only if 
   \begin{equation}\label{equivalence_LIL}
    \frac 1{\sqrt{n\log\log n}}g\circ T^n \underset{n\to \infty}{\to}  0\mbox{ a.s.}  
   \end{equation}
 \end{itemize}
 \end{theo}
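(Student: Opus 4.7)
The plan is to work from the telescoping identity $S_n(f)=S_n(m)+g-g\circ T^n$ and its polygonal interpolation analogue
\[
R_n(t):=S_n^{\mathrm{pl}}(f,t)-S_n^{\mathrm{pl}}(m,t)=g-(1-\{nt\})\,g\circ T^{[nt]}-\{nt\}\,g\circ T^{[nt]+1},
\]
where $\{x\}:=x-[x]$. This immediately yields the uniform bound $\sup_{t\in[0,1]}|R_n(t)|\leqslant|g|+2\max_{1\leqslant k\leqslant n+1}|g\circ T^k|$ together with the sampling identity $R_n(k/n)=g-g\circ T^k$ for $k\in\{0,\dots,n\}$, which together reduce the whole problem to the asymptotic behaviour of $\max_{k\leqslant n}|g\circ T^k|$.

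\textbf{Sufficiency.} Dividing the previous bound by $\sqrt n$ and invoking \eqref{equivalence_WIP}, together with the trivial vanishing of $|g|/\sqrt n$, gives $\sup_{t\in[0,1]}|R_n(t)|/\sqrt n\to 0$ in probability; Slutsky's theorem combined with the invariance principle for $m$ then yields the invariance principle for $f$. For the functional law of the iterated logarithm, I first observe that \eqref{equivalence_LIL} forces $\max_{k\leqslant n}|g\circ T^k|/\sqrt{\itlog n}\to 0$ almost surely (fix $\varepsilon>0$, choose $N_0$ such that $|g\circ T^k|<\varepsilon\sqrt{\itlog k}$ for $k\geqslant N_0$, and split the maximum at $N_0$), whence $\sup_t|R_n(t)|/\sqrt{\itlog n}\to 0$ a.s., and the Strassen limit set is preserved under such a uniform perturbation.

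\textbf{Necessity for the invariance principle.} Assume both $m$ and $f$ satisfy the WIP. Since $T$ preserves $\mu$, for every fixed $t\in(0,1]$ the variable $g\circ T^{[nt]}/\sqrt n$ has the same law as $g/\sqrt n$ and thus tends to $0$ in probability; the finite-dimensional distributions of $R_n/\sqrt n$ therefore all vanish in the limit. Marginal tightness in $C[0,1]$ of $n^{-1/2}S_n^{\mathrm{pl}}(m,\cdot)$ and $n^{-1/2}S_n^{\mathrm{pl}}(f,\cdot)$ yields joint tightness in $C[0,1]^2$, so every subsequential weak limit of $R_n/\sqrt n$ must be identically zero, and hence $\sup_t|R_n(t)|/\sqrt n\to 0$ in probability. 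Sampling at the points $t=k/n$ and absorbing the term $|g|/\sqrt n$ then produces \eqref{equivalence_WIP}.

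\textbf{Necessity for the LIL and FLIL.} This direction is the most delicate step. Using the FLIL for $f$, every subsequential uniform limit of $S_n^{\mathrm{pl}}(f,\cdot)/\sqrt{\itlog n}$ lies in Strassen's set of absolutely continuous paths with $L^2$-bounded derivative; in particular it is continuous at $t=1$. Evaluating such a limit at $t=1$ and $t=(n_k-1)/n_k$ along a convergent subsequence $(n_k)$ yields $f\circ T^{n_k-1}/\sqrt{\itlog{n_k}}\to 0$ a.s., and likewise for $m$. The hard part will be to upgrade this vanishing of increments to the pointwise statement \eqref{equivalence_LIL}, since the naive telescoping $g\circ T^n=g+\sum_{k=0}^{n-1}(g\circ T^{k+1}-g\circ T^k)$ loses the gain. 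My plan is to combine the coboundary identity $g-g\circ T^n=S_n(f)-S_n(m)$ with the $\{0,1\}$-valuedness of the $T$-invariant event $\{|g\circ T^n|>\varepsilon\sqrt{\itlog n}\text{ infinitely often}\}$ and the sharp quantitative LIL rates provided by the martingale-difference structure of $(m\circ T^i)$ to rule out the positive-measure case; this interplay between ergodic invariance and sharp LIL rates is the crux of the argument.
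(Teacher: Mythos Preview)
The paper does not prove this theorem: it is quoted as Theorem~1 of \cite{MR1893125} (Voln\'y--Samek), with the sufficiency direction attributed even earlier to Hall--Heyde \cite{MR624435}. There is therefore no in-paper argument to compare your proposal against.

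On the merits of your sketch: the sufficiency arguments and the necessity for the invariance principle are essentially correct. The genuine gap lies in your plan for the necessity of the (functional) LIL. You propose to close it using ``the sharp quantitative LIL rates provided by the martingale-difference structure of $(m\circ T^i)$'', but the statement makes no such structural hypothesis on $m$---it assumes only that $m$ satisfies the LIL (or FLIL)---so no martingale machinery is available to you. Moreover, even if one grants your FLIL-based step in full, what it delivers is $f\circ T^{n}/\sqrt{\itlog n}\to 0$ and $m\circ T^{n}/\sqrt{\itlog n}\to 0$ a.s., hence only $(g-g\circ T)\circ T^n/\sqrt{\itlog n}\to 0$ a.s.; passing from this increment statement to $g\circ T^n/\sqrt{\itlog n}\to 0$ a.s.\ is precisely the crux, and neither telescoping nor the $0$--$1$ law by itself bridges it. The paper does record (in the remark following Corollary~\ref{cor:BK}) that ergodicity is genuinely required for this direction, which is consistent with your invoking the $0$--$1$ law, but the remaining steps of your plan rest on a hypothesis the theorem does not grant.
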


Both conditions \eqref{equivalence_WIP} and \eqref{equivalence_LIL} take place 
when the function $g$ is square-integrable. If $1<p<2$, Theorem~6 
in \cite{MR2743029} shows that \eqref{Baum-Katz} holds 
if $g$ belongs to $\mathbb L^p$.

However, it may happen that 
we obtain a decomposition \eqref{decomposition_f} where $m\in\mathbb L^2$ 
but the function $g$ is only integrable (see \cite{MR1893125} for 
explicit counter-examples, 
and \cite{MR780729} for a condition which guarantees the 
square integrability of $m$) and in this case, 
the weak invariance principle does not need to hold. We investigate
conditions on the functions $t\mapsto \mu\ens{\abs g >t}$ and   
$t\mapsto \mu\ens{\abs{g-g\circ T}  >t}$ which guarantee \eqref{equivalence_WIP}
or \eqref{equivalence_LIL}. In order to state these conditions in a more 
concise way, we introduce the so-called weak $\mathbb L^q$-spaces.
 
\begin{dfn}
 Let $q$ be a real number strictly greater than $1$. We denote by 
 $\mathbb L^{q,\infty}$ the space of functions $h\colon\Omega\to R$ 
 such that 
 \begin{equation}
   \norm h_{q,\infty}^q :=\sup_{t>0}t^q\mu\ens{\abs h>t}\mbox{ is finite}.   
 \end{equation}
 The subspace of $\mathbb L^{q,\infty}$ which consists of functions 
 $h$ such that $\displaystyle\lim_{t\to +\infty} t^q\mu\ens{\abs h>t}=0$ is 
 denoted by $\mathbb L^{q,\infty}_0$.  
\end{dfn}

\section{Main results}

 In this section, we state the main results of this note. In the first 
 subsection, we give a sufficient condition on the 
 functions $t\mapsto \mu\ens{\abs g >t}$ and   
$t\mapsto \mu\ens{\abs{g-g\circ T}  >t}$
in order to preserve the weak invariance 
principle, the law of the iterated 
logarithm and the $p$-strong law of large numbers respectively.
We provide projective conditions which guarantee these sufficient 
conditions an a martingale coboundary decomposition. 

In the second subsection, we construct counter-examples which 
show that the found conditions are sharp when the considered 
dynamical system is aperiodic.

Finally, in the third subsection, we provide applications 
of the results of Subsection~\ref{sec:sufficient_conditions} 
to Bernoulli shifts.

 Voln\'y and Samek showed in \cite{MR1893125} that the conclusion of
 Theorems~\ref{cond_suff_WIP} and \ref{cond_suff_LIL} (see the 
 next subsections) holds when
 $p\geqslant (r+2)/r$ and that of Theorem~\ref{counter_example} 
 when $p<(r-1)/(r-3/2)$. In the case  $r>2$, we cannot 
 conclude from their results if $(r-1)/(r-3/2)\leqslant  p<(r+2)/r$, 
 while it is the case with our conditions.
 
\subsection{Sufficient conditions}\label{sec:sufficient_conditions}

\begin{theo}\label{cond_suff_WIP} 
 Let $1< p<2$ and 
 let $g\colon\Omega\to\R$ be a function such that $g \in\mathbb L^{p,\infty}_0 $ and 
 $g-g\circ T \in\mathbb L^{p/(p-1),\infty}_0 $. Then for any square integrable 
 martingale differences sequence 
 $(m\circ T^i)_{i\geqslant 0}$, the function $f:= m+g-g\circ T$ 
 satisfies the weak invariance principle in $C[0,1]$.
\end{theo}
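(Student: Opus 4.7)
The plan is to apply Theorem~\ref{equivalence_theorem}. Since $(m\circ T^i)_{i\geqslant 0}$ is a square-integrable martingale difference sequence it satisfies the weak invariance principle; hence by the equivalence theorem the WIP for $f=m+g-g\circ T$ reduces to the transfer-function condition
\begin{equation*}
 \frac{1}{\sqrt n}\max_{1\leqslant k\leqslant n}\abs{g\circ T^k}\underset{n\to\infty}{\to} 0\quad\text{in probability,}
\end{equation*}
so the whole work consists of establishing this condition from the two weak-type hypotheses.

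Fix $\varepsilon>0$ and set $h := g - g\circ T$. The telescoping identity $g\circ T^k = g - S_k(h)$ gives the decomposition
\begin{equation*}
 \mu\!\left\{\max_{1\leqslant k\leqslant n}\abs{g\circ T^k}>2\varepsilon\sqrt n\right\}\leqslant \mu\{\abs g>\varepsilon\sqrt n\}+\mu\!\left\{\max_{1\leqslant k\leqslant n}\abs{S_k(h)}>\varepsilon\sqrt n\right\}.
\end{equation*}
The hypothesis $g\in\mathbb L^{p,\infty}_0$ yields $\mu\{\abs g>\varepsilon\sqrt n\}=o(n^{-p/2})\to 0$, which disposes of the first term.

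The core work is to control the second probability using $h\in\mathbb L^{q,\infty}_0$ with $q=p/(p-1)>2$. The difficulty is that $h$ is a coboundary, not a martingale difference sequence, so Doob's maximal inequality is unavailable. I would proceed by a multi-scale argument: examining the partial sums $S_k(h)$ at halving thresholds $\varepsilon\sqrt n/2^i$ and tracing back from the first exceedance using the incremental relation $\abs{S_k(h)-S_{k-1}(h)}=\abs{h\circ T^{k-1}}$. At each dyadic scale $i$ the event that a ``large jump'' $\abs{h\circ T^j}>\varepsilon\sqrt n/2^{i+1}$ has occurred is controlled by the stationary union bound $n\mu\{\abs h>\varepsilon\sqrt n/2^{i+1}\}\lesssim 2^{q(i+1)}n^{1-q/2}$, which vanishes for each fixed $i$ since $q>2$.

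The main obstacle is that the geometric multiplier $2^{q(i+1)}$ prevents summing the per-scale bounds over all $i\in\{0,\dots,n-1\}$. The argument must be truncated at an optimal depth $i_0=i_0(n)$, with the complementary ``escape'' event (when the trace-back continues beyond $i_0$) handled by converting the condition $\abs{S_{k^{*}-i_0}(h)}>\varepsilon\sqrt n/2^{i_0}$ via the coboundary identity $S_k(h)=g-g\circ T^k$ back into a tail estimate on $g$, controlled by $g\in\mathbb L^{p,\infty}_0$. The conjugate relation $1/p+1/q=1$ is precisely what calibrates the two error contributions — the large-jump bound decays in $n$ at rate $n^{1-q/2}$ but grows geometrically in $i$, whereas the escape bound involves a $g$-tail growing geometrically in the opposite direction — and this balance is what makes the hypothesis sharp, as confirmed by the counter-example in the next subsection.
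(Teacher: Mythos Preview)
Your reduction via Theorem~\ref{equivalence_theorem} is correct, and the first term $\mu\{\abs g>\varepsilon\sqrt n\}$ is indeed harmless. The gap is in the escape step of your multi-scale argument. After tracing back $i_0$ steps you land on the event $\abs{S_{k^*-i_0}(h)}>\varepsilon\sqrt n/2^{i_0}$, and via the coboundary identity this becomes $\abs{g-g\circ T^{k^*-i_0}}>\varepsilon\sqrt n/2^{i_0}$. The term $\abs{g}$ is fine, but $\abs{g\circ T^{k^*-i_0}}$ carries a \emph{random} index ranging over $\{1,\dots,n\}$ --- this is exactly the quantity $\max_{j\leqslant n}\abs{g\circ T^j}$ you set out to bound, so the argument is circular. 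If you break the circularity by a union bound over the $n$ possible indices, the escape probability becomes $n\,\mu\{\abs g>c\sqrt n/2^{i_0}\}=o\bigl(n^{1-p/2}2^{p\,i_0}\bigr)$, and since $p<2$ the factor $n^{1-p/2}$ diverges; no choice of $i_0\geqslant 0$ makes this tend to $0$. The ``balance'' you describe would only work if the escape bound were a \emph{single} $g$-tail $\mu\{\abs g>c\sqrt n/2^{i_0}\}$ without the factor $n$, but that is not what the escape event gives you.

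The paper's proof avoids this circularity by never passing through $\max_k\abs{S_k(h)}$. Instead it partitions $\{1,\dots,n\}$ into $n/k$ blocks of length $k$ and writes $\abs{g\circ T^j}\leqslant\abs{g\circ T^{ik}}+\abs{g\circ T^j-g\circ T^{ik}}$ for $j$ in the $i$-th block. The anchors $\abs{g\circ T^{ik}}$ are controlled by a union bound over only $n/k$ points (not $n$), giving $(n/k)\,\mu\{\abs g>\varepsilon\sqrt n\}$; the within-block oscillation $\abs{g\circ T^j-g\circ T^{ik}}=\abs{S_{j-ik}(h)\circ T^{ik}}\leqslant k\,M^*(h)$ is controlled by the maximal ergodic theorem (Lemma~\ref{weak_lp_maximal_function}), giving $(n/k)\,\mu\{M^*(h)>\varepsilon\sqrt n/k\}$. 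Choosing $k=\lfloor n^{\alpha}\rfloor$ with $\alpha=1-p/2$ (the unique value compatible with $r=p/(p-1)$) makes both terms $o(1)$. The key idea your sketch is missing is this reduction of the number of anchor points from $n$ to $n^{1-\alpha}$; without it the $g$-tail cannot absorb the union bound.
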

 
 A similar result has been obtained for the quenched functional central limit theorem 
 (see \cite{1503.05532}, Corollary~7).

\begin{cor}\label{cor:WIP}
 Let $1<p<2$ and let $\mathcal M$ be a sub-$\sigma$-algebra of $\mathcal F$ such that 
 $T\mathcal M\subset\mathcal M$. Assume that $f$ is an $\mathcal M$-measurable element 
 of $\mathbb L^{p/(p-1),\infty}_0$ such that the following two conditions hold:
 \begin{equation}\label{eq:convergence_E0(Sn)}
  \mbox{ the sequence }\left(\mathbb E\left[S_n(f)\mid\mathcal M\right]\right)_{n\geqslant 1}
  \mbox{ converges in }\mathbb L^{p,\infty};
 \end{equation}
 \begin{equation}\label{eq:convergence_(I-U)E0(Sn)}
  \mbox{ the sequence }\left((I-U)\mathbb E\left[S_n(f)\mid\mathcal M\right]\right)_{n\geqslant 1}
  \mbox{ converges in }\mathbb L^{p/(p-1),\infty}.
 \end{equation}
 Then the function $f$ satisfies the invariance principle.
 
%  If \eqref{eq:convergence_E0(Sn)} and \eqref{eq:convergence_(I-U)E0(Sn)} 
%  are replaced by 
%  \begin{equation}
%   \mbox{ the sequence }\left(\mathbb E\left[S_n(f)\mid\mathcal M\right]\right)_{n\geqslant 1}
%   \mbox{ converges in }\mathbb L^1 \mbox{ and }
%  \end{equation}
%  \begin{equation}
%   \mbox{ the sequence }\left((I-U)\mathbb E\left[S_n(f)\mid\mathcal M\right]\right)_{n\geqslant 1}
%   \mbox{ converges in }\mathbb L^{\infty},
%  \end{equation}
%  then the same conclusion holds.
\end{cor}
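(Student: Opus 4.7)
The plan is to construct a martingale--coboundary decomposition from the hypotheses and then apply Theorem~\ref{cond_suff_WIP}. Set $g_n:=\mathbb E[S_n(f)\mid\mathcal M]$ and let $g$ denote its $\mathbb L^{p,\infty}$-limit furnished by \eqref{eq:convergence_E0(Sn)}. The identity $\mathbb E[h\circ T\mid\mathcal M]=\mathbb E[h\mid T\mathcal M]\circ T$ (valid since $T$ is bijective and measure-preserving), combined with $T\mathcal M\subset\mathcal M$ and the $\mathcal M$-measurability of $f$, transforms the recursion $S_{n+1}(f)=f+S_n(f)\circ T$ into $g_{n+1}=f+\mathbb E[g_n\mid T\mathcal M]\circ T$. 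Passing to the limit in $\mathbb L^{p,\infty}$ (conditional expectation and composition by $T$ are continuous on that space) yields $g=f+\mathbb E[g\mid T\mathcal M]\circ T$, which I would rewrite as the Gordin-type decomposition
\begin{equation*}
f=m+g-g\circ T,\qquad m:=\bigl(g-\mathbb E[g\mid T\mathcal M]\bigr)\circ T.
\end{equation*}

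Next I would verify that $(m\circ T^i)_{i\geqslant 0}$ is a square-integrable martingale differences sequence, with respect to the filtration $\mathcal M_i:=T^{-(i+1)}\mathcal M$, whose increase is exactly the content of $T\mathcal M\subset\mathcal M$. Adaptedness is immediate and by stationarity the martingale condition reduces to
\begin{equation*}
\mathbb E[m\mid\mathcal M]=\mathbb E\bigl[g-\mathbb E[g\mid T\mathcal M]\bigm|T\mathcal M\bigr]\circ T=0,
\end{equation*}
the tower property. For square-integrability I use $m=f-(g-g\circ T)$: the continuous embedding $\mathbb L^{p/(p-1),\infty}\hookrightarrow\mathbb L^{p,\infty}$ identifies the $\mathbb L^{p/(p-1),\infty}$-limit of $(I-U)g_n$ furnished by \eqref{eq:convergence_(I-U)E0(Sn)} with $g-g\circ T$, so $g-g\circ T\in\mathbb L^{p/(p-1),\infty}$; since $p/(p-1)>2$, both $f$ and $g-g\circ T$ belong to $\mathbb L^2$, and hence so does $m$.

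It remains to place $g$ and $g-g\circ T$ in the correct vanishing subspaces. Two auxiliary lemmas handle this. First, for every $q>1$ the subspace $\mathbb L^{q,\infty}_0$ is closed in $\mathbb L^{q,\infty}$, a consequence of the bound $\mu\{|h|>t\}\leqslant\mu\{|h_n|>t/2\}+\mu\{|h-h_n|>t/2\}$ after letting $t\to\infty$ and then $n\to\infty$. Second, conditional expectation maps $\mathbb L^{q,\infty}_0$ into itself: truncating $h=h\mathbf 1_{\{|h|\leqslant N\}}+h\mathbf 1_{\{|h|>N\}}$, the first piece has bounded conditional expectation, while the weak-$\mathbb L^q$ norm of $h\mathbf 1_{\{|h|>N\}}$ tends to zero with $N$ and the conditional expectation operator is bounded on $\mathbb L^{q,\infty}$ for $q>1$ by Marcinkiewicz interpolation between the weak $(1,1)$ and the strong $(\infty,\infty)$ endpoints. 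Applied to $S_n(f)\in\mathbb L^{p/(p-1),\infty}_0$ these facts give $g_n\in\mathbb L^{p/(p-1),\infty}_0\subset\mathbb L^{p,\infty}_0$ and $(I-U)g_n\in\mathbb L^{p/(p-1),\infty}_0$; the closure property then transports the memberships to the limits, yielding $g\in\mathbb L^{p,\infty}_0$ and $g-g\circ T\in\mathbb L^{p/(p-1),\infty}_0$.

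The most delicate point is the preservation of $\mathbb L^{q,\infty}_0$ under conditional expectation, because the weak-$\mathbb L^q$ norm is only a quasi-norm and no pointwise bound on conditional expectation is available; the truncation above is the natural workaround. With all the ingredients in place, Theorem~\ref{cond_suff_WIP} applied to $f=m+g-g\circ T$ delivers the invariance principle for $f$.
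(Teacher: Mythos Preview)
Your proof is correct and follows the same overall strategy as the paper: obtain a martingale--coboundary decomposition $f=m+g-g\circ T$, verify that $m$ is a square-integrable martingale difference, place $g$ in $\mathbb L^{p,\infty}_0$ and $g-g\circ T$ in $\mathbb L^{p/(p-1),\infty}_0$, and invoke Theorem~\ref{cond_suff_WIP}. The differences are only in implementation. Where the paper cites \cite{MR1198662} to produce the decomposition and writes the transfer function as the series $g=\sum_{i\geqslant 0}\mathbb E[U^if\mid T\mathcal M]$, you construct it directly from the recursion $g_{n+1}=f+\mathbb E[g_n\mid T\mathcal M]\circ T$ and a limit argument; this is a minor variant yielding a transfer function conditioned on $\mathcal M$ rather than $T\mathcal M$, but the decomposition is equally valid. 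Where the paper argues in one sentence that each term of its series lies in $\mathbb L^{p,\infty}_0$ and hence so does the limit, you spell out the two ingredients that justify this step---the closedness of $\mathbb L^{q,\infty}_0$ in $\mathbb L^{q,\infty}$ and its stability under conditional expectation via truncation and Marcinkiewicz interpolation---which is a welcome addition since the paper leaves these points implicit.
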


 The conditions of Corollary~\ref{cor:WIP} imply that $f$ admit the 
 martingale-coboundary representation \eqref{decomposition_f} with $m$ and 
 $g$ integrable and $g$ satisfies \eqref{equivalence_WIP}. In \cite{MR2446326,MR2475604},
 the later condition was compared with Dedecker and Rio 
 projective criterion (cf. \cite{MR1743095}):
 \begin{equation}\label{eq:Dedecker_and_Rio_condition}
  \mbox{the sequence }f\mathbb E[S_n(f)\mid\mathcal M]\mbox{ converges in }
  \mathbb L^1,
 \end{equation}
 which also implies that $f$ satisfies the invariance principle.
 It was shown that there is an example of function $f$ which satisfies the 
 martingale-coboundary decomposition in $\mathbb L^1$ and the invariance principle 
 but not  \eqref{eq:Dedecker_and_Rio_condition}.
 
 Here do not assume that $f$ belongs to $\mathbb L^{p/(p-1)}$ and that the convergence in 
 \eqref{eq:convergence_(I-U)E0(Sn)} holds in $\mathbb L^{p}$, otherwise, 
 the function $f$ would satisfy Dedecker-Rio projective criterion. 
 In this way, our condition is independent of \eqref{eq:Dedecker_and_Rio_condition}.

 \begin{propo}\label{propo:comparison}
  Let $(\Omega,\mathcal F,\mu)$ be a dynamical system of positive entropy. 
  Let $1<p<2$. There exists a sub-$\sigma$-algebra $\mathcal M$ such that 
 $T\mathcal M\subset\mathcal M$ and a function $f\in\mathbb L^{p/(p-1),\infty}_0$ satisfying
 \eqref{eq:convergence_E0(Sn)} and \eqref{eq:convergence_(I-U)E0(Sn)} but not 
 \eqref{eq:Dedecker_and_Rio_condition}.
 \end{propo}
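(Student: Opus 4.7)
The strategy is to realize the counterexample inside a Bernoulli factor supplied by the positive-entropy hypothesis: by Sinai's theorem there is an i.i.d.\ sequence $(\varepsilon_j)_{j\in\mathbb Z}$ with $\varepsilon_j\circ T=\varepsilon_{j+1}$, and setting $\mathcal M:=\sigma(\varepsilon_j:j\leqslant 0)$ automatically yields $T\mathcal M\subset\mathcal M$. Writing $Qh:=\mathbb E[h\circ T\mid\mathcal M]$, I look for a past-measurable function $g$ and take $f:=g-Qg$; such an $f$ is $\mathcal M$-measurable, and by the boundedness of $\mathbb E[\,\cdot\mid\mathcal M]$ on the weak Lebesgue spaces (Marcinkiewicz interpolation from the endpoints $\mathbb L^1$ and $\mathbb L^\infty$), $f$ belongs to $\mathbb L^{p/(p-1),\infty}_0$ as soon as $g-g\circ T$ does.

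Given such a $g$, the verifications of \eqref{eq:convergence_E0(Sn)} and \eqref{eq:convergence_(I-U)E0(Sn)} reduce to the telescoping identity $\mathbb E[S_n(f)\mid\mathcal M]=g-Q^ng$---whose proof uses $\mathbb E[g\circ T^k\mid\mathcal M]=Q^kg$, itself obtained from $T$-invariance of conditional expectations---together with $Q^ng\to\mathbb E[g]$ in $\mathbb L^{p,\infty}$ and $Q^n(g-g\circ T)\to 0$ in $\mathbb L^{p/(p-1),\infty}$. These limits are established by approximating $g$ and $g-g\circ T$ in their respective weak-$\mathbb L$ spaces by bounded truncations and applying the triviality of the tail $\sigma$-algebra $\bigcap_n T^n\mathcal M$ on the Bernoulli factor (Kolmogorov's zero-one law), the remainder being controlled by the weak-$\mathbb L$ boundedness of conditional expectation. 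The failure of \eqref{eq:Dedecker_and_Rio_condition} will then follow from engineering $g$ so that $\mathbb E[g(g-g\circ T)]=+\infty$: expanding $\mathbb E\bigl[f\cdot\mathbb E[S_n(f)\mid\mathcal M]\bigr]=\mathbb E[(g-Qg)(g-Q^ng)]$ and using mixing on the cross term $\mathbb E[g\cdot g\circ T^n]\to(\mathbb E g)^2$ shows the expectation tends to $\mathbb E[g(g-g\circ T)]$, so if the latter is infinite then $\|f\cdot\mathbb E[S_n(f)\mid\mathcal M]\|_1\to+\infty$, precluding $\mathbb L^1$-convergence.

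The main obstacle is the construction of $g$ with the three simultaneous properties: $g\in\mathbb L^{p,\infty}_0$, $g-g\circ T\in\mathbb L^{p/(p-1),\infty}_0$, and $\mathbb E[g(g-g\circ T)]=+\infty$. I would build $g$ as a sum of slowly varying level functions over a family of disjoint past-measurable Rokhlin towers $\bigsqcup_{j=0}^{N_k-1}T^jB_k$ inside the Bernoulli factor, with typical value $v_k\asymp 2^k$ on tower $k$ and interior $T$-differences of order $u_k\asymp v_k^{p-1}$, calibrating the height $N_k$ and base measure $\mu(B_k)$ so that the potentially large boundary jumps (of size up to $v_k$) at the tops of the towers do not spoil the $\mathbb L^{p/(p-1),\infty}$ bound on $g-g\circ T$, while $\sum_k v_k u_k\,\mu\bigl(\bigsqcup_{j=0}^{N_k-1}T^jB_k\bigr)$ diverges. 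The delicate point is that $p/(p-1)>2$ forces genuine interior cancellation in $g-g\circ T$ (much stronger than the tail decay of $g$ itself), and reconciling this with the non-integrability of $g(g-g\circ T)$ demands the full flexibility in tower parameters afforded by the aperiodicity of $T$ implied by positive entropy.
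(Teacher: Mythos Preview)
Your approach is genuinely different from the paper's and contains a real gap. The paper does not pass through a cobounding function at all: it borrows the construction of Durieu--Voln\'y, taking two independent $T$-invariant factors $\mathcal B,\mathcal C$, a $\mathcal B$-measurable sign sequence $e_i=e_0\circ T^i$, disjoint sets $A_k\in\mathcal C$ with $\mu(A_k)\asymp\rho_k$, and setting $f=\sum_k\theta_k e_{-N_k}\mathbf 1(A_k)$ with $\mathcal M=\mathcal C\vee\sigma(e_i,i\leqslant 0)$. The disjointness of the $A_k$ makes $\mathbb E[S_n(f)\mid\mathcal M]$ and $(I-U)\mathbb E[S_n(f)\mid\mathcal M]$ explicitly computable, and Dedecker--Rio is handled by an already-known criterion (it holds iff $\sum_k\theta_k^2\sqrt{N_k}\rho_k<\infty$). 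One then simply writes down $\theta_k,\rho_k,N_k$ satisfying the two convergence conditions while violating this series test.

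The main gap in your route is the verification of \eqref{eq:convergence_(I-U)E0(Sn)}. From $\mathbb E[S_n(f)\mid\mathcal M]=g-Q^ng$ you need $(I-U)Q^ng=Q^ng-(Q^ng)\circ T\to 0$ in $\mathbb L^{p/(p-1),\infty}$, but you justify this by ``$Q^n(g-g\circ T)\to 0$''. These are different objects: $Q^n(I-U)g=Q^ng-Q^{n+1}g$, whereas $(I-U)Q^ng=Q^ng-UQ^ng$, and $UQ^ng=\mathbb E[g\circ T^{n+1}\mid T^{-1}\mathcal M]\neq Q^{n+1}g$ in general. The only soft information available is $Q^ng\to\mathbb E g$ in $\mathbb L^{p,\infty}$, which gives $(I-U)Q^ng\to 0$ merely in $\mathbb L^{p,\infty}$, not in the smaller space $\mathbb L^{p/(p-1),\infty}$ required by \eqref{eq:convergence_(I-U)E0(Sn)}. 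Without additional structure on $g$ (which your tower sketch does not supply), this step fails.

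A secondary issue is the treatment of Dedecker--Rio. Your argument expands $\mathbb E[(g-Qg)(g-Q^ng)]$ and invokes ``mixing on $\mathbb E[g\cdot g\circ T^n]\to(\mathbb E g)^2$'', but with $g$ only in $\mathbb L^{p,\infty}_0$, $p<2$, the quantities $\mathbb E[g^2]$ and $\mathbb E[g\cdot g\circ T^n]$ need not be finite, and the mixing limit is not available in this integrability class. The paper sidesteps all of this by using a direct combinatorial criterion for the Dedecker--Rio condition that is specific to the chosen form of $f$.
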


\begin{theo}\label{cond_suff_LIL}
Let $1<p<2< r$ and let $g\colon\Omega\to\R$ be a function. 
\begin{enumerate}[label=(\roman*)]
 \item\label{strict_inequality}  If $p>r/(r-1)$, $g \in\mathbb L^{p,\infty}$ and 
 $g-g\circ T \in\mathbb L^{r,\infty}$, then for any martingale differences sequence 
 $(m\circ T^i)_{i\geqslant 0}$, the function 
 $f:= m+g-g\circ T$ satisfies the law of the iterated logarithm;
 \item\label{equality_case}  if $p=r/(r-1)$, $g \in\mathbb L^p $ and 
 $g-g\circ T \in\mathbb L^r$, then for any square integrable martingale differences sequence 
 $(m\circ T^i)_{i\geqslant 0}$, the function 
 $f:= m+g-g\circ T$ satisfies the law of the iterated logarithm.
\end{enumerate}
 
\end{theo}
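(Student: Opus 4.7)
Invoke Theorem~\ref{equivalence_theorem}: provided that $(m\circ T^i)_{i\geqslant 0}$ itself satisfies the law of the iterated logarithm (automatic in case \ref{equality_case} from square-integrability, hypothesised in case \ref{strict_inequality}), the law of the iterated logarithm for $f=m+g-g\circ T$ is equivalent to
\[
\frac{g\circ T^n}{\sqrt{n\log\log n}}\longrightarrow 0\quad \mu\text{-a.s.,}
\]
and that is the sole target of the proof. I would work on the dyadic skeleton $n_k=2^k$ and use the coboundary telescoping $g\circ T^n=g\circ T^{n_k}-\sum_{j=n_k}^{n-1}h\circ T^j$ (for $n\in[n_k,n_{k+1})$, with $h:=g-g\circ T$) to split
\[
\max_{n_k\leqslant n<n_{k+1}}|g\circ T^n|\leqslant|g\circ T^{n_k}|+\max_{0\leqslant m<n_k}|S_m(h)|\circ T^{n_k}.
\]

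The skeleton contribution is harmless by Borel--Cantelli: by stationarity, $\mu(|g\circ T^{n_k}|>\varepsilon\sqrt{n_k\log\log n_k})=\mu(|g|>\varepsilon\sqrt{n_k\log\log n_k})\leqslant C/(n_k\log\log n_k)^{p/2}$, and the sum over $k$ converges as a geometric series because $n_k=2^k$. The core of the proof is therefore the summability in $k$ of $\mu(\max_{0\leqslant m<n_k}|S_m(h)|>\varepsilon\sqrt{n_k\log\log n_k})$, for which two pieces of information must be combined. First, the coboundary identity $S_m(h)=g-g\circ T^m$ combined with $g\in\mathbb L^{p,\infty}$ yields the uniform tail bound $\mu(|S_m(h)|>t)\leqslant C/t^p$, hence the occupation-time estimate $\mathbb{E}\,\#\{m\leqslant N:|S_m(h)|>t\}\leqslant CN/t^p$. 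Second, $h\in\mathbb L^{r,\infty}$ controls the individual jumps: $\mu(\max_j|h\circ T^j|>\eta)\leqslant CN/\eta^r$ on a block of length $N$. After truncating $h$ at level $\eta$, the partial sums are $\eta$-Lipschitz in $m$ on the event that no jump exceeds $\eta$, so any excursion $|S_{m^*}(h)|>t$ forces a cluster of $\asymp t/\eta$ consecutive indices $m$ with $|S_m(h)|>t/2$; combining the cluster-size lower bound with the expected occupation count yields
\[
\mu\Bigl(\max_{0\leqslant m<N}|S_m(h)|>t\Bigr)\leqslant C\frac{N\eta}{t^{p+1}}+C\frac{N}{\eta^{r}},
\]
and optimising over $\eta$ produces a bound $\leqslant CN/t^{r(p+1)/(r+1)}$. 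With $N=n_k$ and $t=\varepsilon\sqrt{n_k\log\log n_k}$, this is summable in $k$ exactly when $p>(r+2)/r$, which is precisely the Voln\'y--Samek range.

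The main technical obstacle is to sharpen this maximal inequality from the exponent $(r+2)/r$ down to the announced $r/(r-1)$, and to handle the endpoint $p=r/(r-1)$ in case \ref{equality_case}. I would attempt this by introducing a multiscale truncation of $h$ along a geometric sequence of scales, so that the cluster-size estimate is refined at each level, combined with a finer density bound based on the observation that, on the high-probability event $\{|g|\leqslant t/2\}$, $\{|S_m(h)|>t\}$ collapses to $\{|g\circ T^m|>t/2\}$, to which Fava-type weak ergodic maximal theorems apply and beat the crude $CN/t^p$ count. For case \ref{equality_case}, where $p=r/(r-1)$ is the Hölder-conjugate endpoint, the strong $\mathbb L^p$ and $\mathbb L^r$ norms (rather than the weak ones) provide the extra integrability required to close the estimate at the critical exponent. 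Once the sharp maximal inequality is in hand, the remainder is routine Borel--Cantelli bookkeeping applied to the dyadic decomposition above.
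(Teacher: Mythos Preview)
Your reduction via Theorem~\ref{equivalence_theorem} is correct, and the skeleton/remainder split is the right shape. The genuine gap is the choice of dyadic blocks $[n_k,n_{k+1})$ of length $n_k$: with block length comparable to the position, the fluctuation term $\max_{0\leqslant m<n_k}|S_m(h)|$ must be controlled at threshold $t\asymp\sqrt{n_k}$, and as you yourself compute, your occupation/cluster argument only gives $\mu(\max_{m<N}|S_m(h)|>t)\lesssim N/t^{r(p+1)/(r+1)}$, which is summable on dyadic blocks exactly in the old Voln\'y--Samek range $p>(r+2)/r$. The ``multiscale truncation'' and ``Fava-type'' refinements you sketch for the sharpening are not a proof; they are a hope, and the crucial step --- closing the gap between $(r+2)/r$ and $r/(r-1)$ --- is precisely the content of the theorem.

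The paper avoids this maximal-inequality programme entirely by choosing \emph{sublinear} block lengths. One fixes $\alpha\in(0,1)$, sets $m_j:=\sum_{i<j}\lfloor i^\alpha\rfloor\asymp j^{1+\alpha}$, and applies Borel--Cantelli to
\[
p_j:=\mu\Bigl\{\max_{0\leqslant i\leqslant\lfloor j^\alpha\rfloor}|g\circ T^{m_j+i}|>\varepsilon\sqrt{m_j\log\log m_j}\Bigr\}.
\]
After the same anchor/remainder split, the remainder is now $\max_{0\leqslant i\leqslant\lfloor j^\alpha\rfloor}|g\circ T^i-g|$, which is dominated by $\lfloor j^\alpha\rfloor\cdot M^*(g-g\circ T)$. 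Because the block length $j^\alpha$ is \emph{much smaller} than $\sqrt{m_j}\asymp j^{(1+\alpha)/2}$, the resulting threshold for $M^*(h)$ is $\asymp j^{(1-\alpha)/2}\to\infty$, and a direct weak-$\mathbb L^r$ tail bound on $M^*(h)$ (Lemma~\ref{weak_lp_maximal_function}) applies with no clustering tricks. One obtains
\[
p_j\lesssim j^{-(1+\alpha)p/2}+j^{-(1-\alpha)r/2},
\]
and both exponents exceed $1$ precisely when $2/p-1<\alpha<1-2/r$, a nonempty interval iff $p>r/(r-1)$. At the endpoint $p=r/(r-1)$ one takes $\alpha=2/p-1=1-2/r$ exactly; then $p_j\leqslant\mu\{|g|>c\varepsilon j^{1/p}\}+\mu\{M^*(h)>c\varepsilon j^{1/r}\}$, and the strong $\mathbb L^p$ and $\mathbb L^r$ assumptions (via Lemma~\ref{weak_lp_maximal_function}\ref{lq_lq_inequality}) make each term summable. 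The single free parameter $\alpha$ governing block length is the idea you are missing; with it, the sharp exponent falls out in two lines, and no maximal inequality beyond the ergodic maximal theorem is needed.
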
 
  
\begin{cor}\label{cor:cond_suff_LIL}
 Let $1<p<2$ and let $\mathcal M$ be a sub-$\sigma$-algebra of $\mathcal F$ such that 
 $T\mathcal M\subset\mathcal M$. Assume that $f$ is an $\mathcal M$-measurable element 
 of $\mathbb L^{p/(p-1)}$ such that the following two conditions hold:
 \begin{equation}\label{eq:convergence_E0(Sn)_LIL}
  \mbox{ the sequence }\left(\mathbb E\left[S_n(f)\mid\mathcal M\right]\right)_{n\geqslant 1}
  \mbox{ converges in }\mathbb L^{p};
 \end{equation}
 \begin{equation}\label{eq:convergence_(I-U)E0(Sn)_LIL}
  \mbox{ the sequence }\left((I-U)\mathbb E\left[S_n(f)\mid\mathcal M\right]\right)_{n\geqslant 1}
  \mbox{ converges in }\mathbb L^{p/(p-1)}.
 \end{equation}
 Then the function $f$ satisfies the functional law of the iterated logarithms.
\end{cor}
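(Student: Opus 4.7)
The plan is to produce a Gordin-type martingale-coboundary decomposition $f=m+g-g\circ T$ in which $(m\circ T^i)_{i\geqslant 0}$ is a square-integrable martingale difference sequence, to apply Theorem~\ref{cond_suff_LIL}\,\ref{equality_case} with the dual exponent $r:=p/(p-1)$ (so that the relation $p=r/(r-1)$ is fulfilled), and finally to upgrade the scalar LIL it produces to the functional LIL through the equivalence Theorem~\ref{equivalence_theorem}.

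For the decomposition, I would set $g_n:=\mathbb{E}[S_n(f)\mid\mathcal{M}]$ and let $g\in\mathbb{L}^p$ be its limit provided by \eqref{eq:convergence_E0(Sn)_LIL}. Using that $T\mathcal{M}\subset\mathcal{M}$ is equivalent to $\mathcal{M}\subset T^{-1}\mathcal{M}$, the tower property, the $\mathcal{M}$-measurability of $f$, and the identity $S_n(f)\circ T=S_{n+1}(f)-f$ yield
\begin{equation*}
\mathbb{E}[g_n\circ T\mid\mathcal{M}]=g_{n+1}-f.
\end{equation*}
Letting $n\to\infty$ in $\mathbb{L}^p$ (the Koopman operator being an $\mathbb{L}^p$-isometry and $\mathbb{E}[\,\cdot\mid\mathcal{M}]$ an $\mathbb{L}^p$-contraction) gives $f=g-\mathbb{E}[g\circ T\mid\mathcal{M}]$, so that $m:=g\circ T-\mathbb{E}[g\circ T\mid\mathcal{M}]$ satisfies $f=m+g-g\circ T$ and $(m\circ T^i)_{i\geqslant 0}$ is a martingale difference sequence for the increasing filtration $(T^{-i}\mathcal{M})_{i\geqslant 0}$.

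To check the integrability hypotheses, I would observe that $(I-U)g_n\to g-g\circ T$ in $\mathbb{L}^p$ while by \eqref{eq:convergence_(I-U)E0(Sn)_LIL} it also converges in $\mathbb{L}^{p/(p-1)}$; since $1<p<2$ implies $p/(p-1)>2>p$, the continuous inclusion $\mathbb{L}^{p/(p-1)}\hookrightarrow\mathbb{L}^p$ forces the two limits to coincide, so $g-g\circ T\in\mathbb{L}^{p/(p-1)}\subset\mathbb{L}^2$. As $f\in\mathbb{L}^{p/(p-1)}\subset\mathbb{L}^2$ by assumption, it follows that $m=f-(g-g\circ T)\in\mathbb{L}^2$.

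With these bounds, Theorem~\ref{cond_suff_LIL}\,\ref{equality_case} applied with $r=p/(p-1)$ yields the scalar LIL for $f$. The last step is to upgrade this to the functional LIL: since $(m\circ T^i)_{i\geqslant 0}$ is square-integrable, the functional LIL holds for $m$, and Theorem~\ref{equivalence_theorem} states that for this $m$, both the LIL and the functional LIL for $f$ are equivalent to the same pointwise condition \eqref{equivalence_LIL} on the transfer function $g$. The scalar LIL for $f$ therefore forces \eqref{equivalence_LIL}, from which the functional LIL for $f$ follows. The main delicate point, and the reason for invoking the critical case \ref{equality_case} of Theorem~\ref{cond_suff_LIL}, is that the decomposition is built only at the $\mathbb{L}^p$ level for $g$ and the $\mathbb{L}^{p/(p-1)}$ level for $g-g\circ T$, rather than in a symmetric $\mathbb{L}^2$ setting.
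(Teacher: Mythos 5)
Your proof is correct and follows essentially the same route as the paper: the same martingale--coboundary decomposition (which the paper obtains by citing Voln\'y's theorem, whereas you derive it by hand from the identity $\mathbb E[g_n\circ T\mid\mathcal M]=g_{n+1}-f$), the same identification $g\in\mathbb L^p$, $g-g\circ T\in\mathbb L^{p/(p-1)}$, $m\in\mathbb L^{p/(p-1)}\subset\mathbb L^2$, and the same application of Theorem~\ref{cond_suff_LIL}\,\ref{equality_case} with $r=p/(p-1)$. Your closing upgrade from the scalar LIL to the functional LIL via the ``only if'' direction of Theorem~\ref{equivalence_theorem} is legitimate under the paper's standing ergodicity assumption and makes explicit a step the paper glosses over; note, though, that the proof of Theorem~\ref{cond_suff_LIL} actually verifies condition \eqref{equivalence_LIL} directly, so the functional LIL follows from the ``if'' direction alone, without the detour through the converse.
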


 \begin{rem}  \label{rmq:weak_Lp}
  In \cite{MR1893125}, $\mathbb L^q$ spaces are involved. It turns out 
  that in the setting of Theorems~\ref{cond_suff_WIP} and \ref{cond_suff_LIL}, 
  \ref{strict_inequality}, we may work with weak $\mathbb L^q$-spaces. 
  For the case \ref{equality_case} in Theorem~\ref{cond_suff_LIL}, 
  it is an open question to determine whether strong moments 
  are actually needed.
 \end{rem}
 \begin{theo}\label{cond_suff_LLN}
 Let $1\leqslant q<p<r<2$ be real numbers and let $g\colon\Omega\to \R$ 
 be a function such that $g \in\mathbb L^q$ and $g-g\circ T\in\mathbb L^r$. 
 If $q\geqslant (p-1)r /(r-1)$, then for any martingale differences sequence 
 $(m\circ T^i)_{i\geqslant 0}$ with $m\in \mathbb L^p$, the function $f:= m+g-g\circ T$ 
 satisfies \eqref{Baum-Katz}. 
\end{theo}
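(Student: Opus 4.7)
The plan is to set $h := g - g\circ T$ and exploit the telescoping identity $S_n(f) = S_n(m) + g - g\circ T^n$, which yields
\begin{equation*}
\max_{1\le k\le n}|S_k(f)| \le \max_{1\le k\le n}|S_k(m)| + \max_{1\le k\le n}|g - g\circ T^k|.
\end{equation*}
The martingale piece is handled by Theorem~5 of \cite{MR2743029} since $m\in\mathbb L^p$, so only the coboundary part needs attention. Bounding $\max_k|g-g\circ T^k|\le |g|+\max_{0\le k\le n}|g\circ T^k|$, the single-value contribution $\sum_n n^{\alpha p-2}\mu\{|g|\ge\varepsilon n^\alpha/2\}$ converges by Markov's inequality: the hypothesis $q\ge(p-1)r/(r-1)$ together with $p>1$ gives $p-q\le(r-q)/r<1$, so $\alpha(p-q)<1$ for every $\alpha\in[1/p,1]$.

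The heart of the argument is a block decomposition for $\mu\{\max_{0\le k\le n}|g\circ T^k|\ge \lambda\}$. For $L\in\{1,\dots,n\}$, writing $k = jL+k'$ with $0\le k'<L$ and using the coboundary identity $g\circ T^{jL+k'} = g\circ T^{jL} - S_{k'}(h)\circ T^{jL}$ yields
\begin{equation*}
\max_{0\le k\le n}|g\circ T^k| \le \max_{j}|g\circ T^{jL}| + \max_{j}\max_{0\le k'<L}|S_{k'}(h)\circ T^{jL}|.
\end{equation*}
A union bound over the $\lceil n/L\rceil+1$ block starting points, together with stationarity, the $\mathbb L^q$-tail of $g$, and Markov's inequality applied to the triangle bound $\|S_L(|h|)\|_r\le L\|h\|_r$, produces
\begin{equation*}
\mu\Big\{\max_{0\le k\le n}|g\circ T^k|\ge\lambda\Big\} \le \frac{Cn}{L\lambda^q}+\frac{CnL^{r-1}}{\lambda^r}.
\end{equation*}
Balancing the two terms via $L\asymp \lambda^{(r-q)/r}$ collapses the estimate to $Cn/\lambda^\gamma$ with $\gamma := 1+q(r-1)/r$; the assumption $q\ge(p-1)r/(r-1)$ is precisely the inequality $\gamma\ge p$. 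When $\gamma>p$, substituting $\lambda = \varepsilon n^\alpha$ yields the convergent series $\sum_n n^{\alpha(p-\gamma)-1}$ and completes the proof.

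The main obstacle is the boundary case $\gamma=p$, where the same substitution produces only the divergent harmonic series. Here my plan is to forego the optimization and keep $L_n := n^{\alpha(r-q)/r}$, then sum the two block contributions separately via a layer-cake (Fubini) argument. At the boundary the identity $p-(r-q)/r=q$ holds, so the first piece becomes
\begin{equation*}
\sum_n n^{\alpha q-1}\mu\{|g|\ge\varepsilon n^\alpha/2\}=\int\!\sum_{n\le(2|g|/\varepsilon)^{1/\alpha}}n^{\alpha q-1}\,d\mu\le C\|g\|_q^q,
\end{equation*}
finite by the assumption $g\in\mathbb L^q$. The delicate step is the companion piece involving $\mu\{\max_{k'<L_n}|S_{k'}(h)|\ge\varepsilon n^\alpha/2\}$, for which the weak $\mathbb L^r$-Markov bound is insufficient. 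I would refine it by an auxiliary truncation $h = h\mathbf 1_{|h|\le M_n}+h\mathbf 1_{|h|>M_n}$ with $M_n$ chosen so that $L_n M_n\le \varepsilon n^\alpha/4$ deterministically, controlling the tail part through $\mathbb E[|h|\mathbf 1_{|h|>M_n}]\le M_n^{1-r}\|h\|_r^r$ sharpened by the strong $\mathbb L^r$-integrability of $h$. Extracting a summable contribution at the boundary from this tail estimate is the most delicate point of the argument.
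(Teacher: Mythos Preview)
Your block decomposition and the treatment of the strict case $q>(p-1)r/(r-1)$ are correct and structurally match the paper's proof. The gap is in the boundary case $q=(p-1)r/(r-1)$, at precisely the step you flag as delicate. After the $\mathbb L^1$-Markov bound on the truncated tail, the second block contribution becomes
\[
\sum_{n\geqslant 1} n^{\alpha(p-1)-1}\,\mathbb E\bigl[\abs{h}\,\mathbf 1\{\abs{h}>M_n\}\bigr],
\qquad M_n\asymp n^{\alpha q/r}.
\]
Bounding the expectation termwise by $M_n^{1-r}\norm{h}_r^r$ gives, at the boundary, exactly $\sum_n n^{-1}$; replacing $\norm{h}_r^r$ by $\mathbb E[\abs{h}^r\mathbf 1\{\abs{h}>M_n\}]$ only improves this to $\sum_n n^{-1}\varphi(M_n)$ with $\varphi(M)\to 0$ at an uncontrolled rate, which need not converge. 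So the sharpening you propose does not close the argument.

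The paper sidesteps this by replacing your crude bound $\max_{0\leqslant k'<L}\abs{S_{k'}(h)}\leqslant S_L(\abs h)$ with the ergodic maximal function: from $\max_{0\leqslant k'<L}\abs{S_{k'}(h)}\leqslant L\cdot M^*(h)$, where $M^*(h)=\sup_{N\geqslant 1}N^{-1}\abs{S_N(h)}$, the second block contribution is
\[
\sum_{n\geqslant 1} n^{\alpha q-1}\,\mu\bigl\{M^*(h)>c\,n^{\alpha q/r}\bigr\},
\]
which by layer-cake is a constant times $\mathbb E[(M^*(h))^r]$, finite since $h\in\mathbb L^r$ forces $M^*(h)\in\mathbb L^r$ (Lemma~\ref{weak_lp_maximal_function}). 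Your truncation can also be salvaged, but only by interchanging the sum and the expectation \emph{before} estimating: Fubini gives
\[
\sum_{n\geqslant 1} n^{\alpha(p-1)-1}\,\mathbb E\bigl[\abs{h}\,\mathbf 1\{\abs{h}>M_n\}\bigr]
=\mathbb E\Bigl[\abs{h}\sum_{n:\,M_n<\abs{h}}n^{\alpha(p-1)-1}\Bigr]
\leqslant C\,\mathbb E\bigl[\abs{h}^{1+r(p-1)/q}\bigr],
\]
and at the boundary the exponent $1+r(p-1)/q$ equals $r$.
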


\begin{cor}\label{cor:BK}
 Let $1<p<r<2$ and let $\mathcal M$ be a sub-$\sigma$-algebra of $\mathcal F$ such that 
 $T\mathcal M\subset\mathcal M$. Assume that $f$ is an $\mathcal M$-measurable element 
 of $\mathbb L^{r}$ such that the following two conditions hold:
 \begin{equation}\label{eq:convergence_E0(Sn)_BK}
  \mbox{ the sequence }\left(\mathbb E\left[S_n(f)\mid\mathcal M\right]\right)_{n\geqslant 1}
  \mbox{ converges in }\mathbb L^{\max\ens{1,(p-1)r /(r-1)}};
 \end{equation}
 \begin{equation}\label{eq:convergence_(I-U)E0(Sn)_BK}
  \mbox{ the sequence }\left((I-U)\mathbb E\left[S_n(f)\mid\mathcal M\right]\right)_{n\geqslant 1}
  \mbox{ converges in }\mathbb L^{r}.
 \end{equation}
 Then for each positive $\varepsilon$ and any $\alpha\in[1/p,1]$, we have 
  \begin{equation}\label{eq:BK_rewritten}
 \sum_{n=1}^{+\infty}n^{\alpha p-2}
 \mu\ens{\max_{1\leqslant k\leqslant n} 
 \abs{S_k(f)}\geqslant\varepsilon n^\alpha }< +\infty. 
 \end{equation}
\end{cor}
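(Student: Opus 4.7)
The approach is to reduce Corollary~\ref{cor:BK} to Theorem~\ref{cond_suff_LLN} by extracting a martingale-coboundary decomposition of $f$ from the two projective hypotheses, exactly in the spirit of the template suggested by Corollary~\ref{cor:WIP}.

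Set $q:=\max\{1,(p-1)r/(r-1)\}$. The inequality $(p-1)r<p(r-1)$, equivalent to $p<r$, yields $q<p$, so that $1\leqslant q<p<r<2$. By~\eqref{eq:convergence_E0(Sn)_BK} there exists an $\mathcal{M}$-measurable function $g\in\mathbb{L}^q$ with $g_n:=\mathbb{E}[S_n(f)\mid\mathcal{M}]\to g$ in $\mathbb{L}^q$; this forces $(I-U)g_n\to g-g\circ T$, whose limit lies in $\mathbb{L}^r$ by~\eqref{eq:convergence_(I-U)E0(Sn)_BK}. Define $m:=f-g+g\circ T$. Since both $f$ and $g-g\circ T$ belong to $\mathbb{L}^r\subset\mathbb{L}^p$, we have $m\in\mathbb{L}^p$.

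The key step is to verify that $(m\circ T^i)_{i\geqslant 0}$ is a martingale differences sequence for the filtration $\mathcal{F}_i:=T^{-i-1}\mathcal{M}$, which is increasing by the assumption $T\mathcal{M}\subset\mathcal{M}$. By stationarity this reduces to $\mathbb{E}[m\mid\mathcal{M}]=0$. Writing $Qh:=\mathbb{E}[h\circ T\mid\mathcal{M}]$, one has $g_n=\sum_{k=0}^{n-1}Q^kf$; passing to the limit and using that conditional expectation is an $\mathbb{L}^1$-contraction, we obtain $\mathbb{E}[g\circ T\mid\mathcal{M}]=\sum_{k\geqslant 1}Q^kf=g-f$, so that $\mathbb{E}[m\mid\mathcal{M}]=f-g+(g-f)=0$.

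With this in hand, the hypotheses of Theorem~\ref{cond_suff_LLN} are met with the triple $(q,p,r)$: one has $q\geqslant (p-1)r/(r-1)$ by definition of $q$, $g\in\mathbb{L}^q$, $g-g\circ T\in\mathbb{L}^r$, and $(m\circ T^i)_{i\geqslant 0}$ is a martingale differences sequence with $m\in\mathbb{L}^p$. Applying the theorem directly gives~\eqref{eq:BK_rewritten}. The main obstacle, as is typical for such transfer arguments, is the martingale identification step above; everything else amounts to matching exponents.
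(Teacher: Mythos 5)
Your proposal is correct and follows essentially the same route as the paper: both reduce the corollary to Theorem~\ref{cond_suff_LLN} by extracting a martingale--coboundary decomposition $f=m+g-g\circ T$ with $g\in\mathbb{L}^{\max\{1,(p-1)r/(r-1)\}}$, $g-g\circ T\in\mathbb{L}^r$ and $m\in\mathbb{L}^r\subset\mathbb{L}^p$ from the two projective hypotheses. The only difference is cosmetic: the paper delegates the martingale identification to Voln\'y's theorem (as in the proof of Corollary~\ref{cor:WIP}), whereas you verify $\mathbb{E}[m\mid\mathcal M]=0$ directly via $g=\sum_{k\geqslant 0}Q^kf$, which is a correct and self-contained rendering of the same argument.
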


\begin{rem}
 Ergodicity of the dynamical system is required for the "only if" direction 
 in the equivalence involving the law of the iterated logarithm of
 Theorem~\ref{equivalence_theorem}. Therefore, the results of this subsection 
 remain valid in the non-ergodic setting.
\end{rem}

\subsection{Counter-examples} 
 
 In Theorems~\ref{cond_suff_WIP} and \ref{cond_suff_LIL}, we gave a sufficient 
 condition on $(p,r)$ for which $g\in \mathbb L^p$ and $g-g\circ T^r\in\mathbb L^r$ 
 guarantees the invariance principle, and the law of iterated logarithms for 
 $g-g\circ T$. 
 The next results show that when this condition is not satisfied, 
 these limit theorems may fail, which shows its sharpness.
 
\begin{theo}\label{counter_example} 
Assume that the dynamical system $(\Omega,\mathcal F,\mu,T)$ is 
aperiodic. Let $1\leqslant p<2\leqslant r$ be real numbers such that $p<r/(r-1)$. 
Then there exists a function $g\in \mathbb L^p$ such that $g-g\circ T \in\mathbb L^r$ 
 and the function $g-g\circ T$ satisfies neither the invariance 
 principle nor the law of the iterated logarithm.
\end{theo}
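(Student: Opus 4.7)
The plan is to invoke Theorem~\ref{equivalence_theorem} with $m\equiv 0$: the zero sequence is a trivially square-integrable martingale-differences sequence satisfying both the invariance principle and the functional law of the iterated logarithm, so the coboundary $f=g-g\circ T$ will fail both limit theorems as soon as $g$ violates \eqref{equivalence_WIP} and \eqref{equivalence_LIL}. The task therefore reduces to constructing $g\in\mathbb L^p$ with $g-g\circ T\in\mathbb L^r$ whose orbit produces spikes above the thresholds $\sqrt{n}$ and $\sqrt{n\log\log n}$ along a suitable subsequence of $n$.

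The building blocks will be triangular bumps placed on Rokhlin towers---whose existence is precisely what aperiodicity gives us. I would fix a doubly exponential sequence $h_k=2^{2^k}$ and, for each $k$, use Rokhlin's lemma to pick $A_k$ so that $A_k,TA_k,\dots,T^{h_k-1}A_k$ are pairwise disjoint with $h_k\mu(A_k)\geqslant 1-2^{-k}$. On the $k$-th tower I would place a triangular bump of peak height $M_k:=\sqrt{h_k\log\log h_k}$, centred at level $\ell_k:=\lfloor h_k/2\rfloor$, of half-width $w_k:=\lfloor h_k^\alpha\rfloor$: explicitly, if $\omega\in T^jA_k$ then $g_k(\omega):=M_k\bigl(1-|j-\ell_k|/w_k\bigr)_+$, and $g_k(\omega):=0$ otherwise. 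Set $g:=\sum_{k\geqslant 1}g_k$; this is a series of non-negative functions whose pointwise a.e.\ convergence follows from the $\mathbb L^1$ estimate below.

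The choice of $\alpha$ is dictated by integrability. A routine count gives $\|g_k\|_p^p\asymp M_k^pw_k\mu(A_k)\asymp h_k^{p/2-1+\alpha}(\log\log h_k)^{p/2}$ and $\|g_k-g_k\circ T\|_r^r\asymp (M_k/w_k)^rw_k\mu(A_k)\asymp h_k^{r/2-1+\alpha(1-r)}(\log\log h_k)^{r/2}$, the boundary terms costing nothing because the bump is strictly inside the tower. I would require both exponents of $h_k$ to be negative, i.e., $(r-2)/(2(r-1))<\alpha<1-p/2$; the admissible interval is nonempty \emph{exactly} when $p<r/(r-1)$, which is the hypothesis. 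Combined with the doubly exponential growth of $h_k$, both series converge absolutely (even after taking $p$-th and $r$-th roots), and the triangle inequality delivers $g\in\mathbb L^p$ and $g-g\circ T\in\mathbb L^r$.

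For the failure, set $F_k:=\bigcup_{j=0}^{\lfloor\ell_k/2\rfloor}T^jA_k$, which has measure $\geqslant 1/4-o(1)$. For $\omega\in F_k$ at level $j\leqslant\ell_k/2$, the point $T^{\ell_k-j}\omega$ sits on top of the peak, hence $g(T^{\ell_k-j}\omega)\geqslant g_k(T^{\ell_k-j}\omega)=M_k$, with $\ell_k-j\in[\ell_k/2,\ell_k]$. Taking $n_k=h_k$ yields $\max_{1\leqslant i\leqslant n_k}|g\circ T^i|\geqslant M_k\gg\sqrt{n_k}$ on $F_k$, breaking \eqref{equivalence_WIP}. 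Fatou's lemma then gives $\mu(\limsup_kF_k)\geqslant 1/4$, and since $\{\omega:(n\log\log n)^{-1/2}|g\circ T^n(\omega)|\not\to 0\}$ is $T$-invariant, ergodicity promotes this positive measure to full measure, on which the displayed quantity exceeds a fixed positive constant infinitely often along $n=\ell_k-j\to\infty$, so \eqref{equivalence_LIL} fails as well. The main obstacle I foresee is the delicate balancing act encoded in the choice of $\alpha$: the peak height $M_k$ is fixed by the desired failure rate, while the width $w_k$ is simultaneously pulled narrow (to keep $\|g_k\|_p$ under control) and wide (to flatten the slope $M_k/w_k$ appearing in $\|g_k-g_k\circ T\|_r$); the two demands become compatible exactly when $p<r/(r-1)$, which is precisely where the hypothesis of the theorem is being used.
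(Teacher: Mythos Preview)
Your proposal is correct and follows essentially the same construction as the paper: both place triangular bumps of peak height $\sqrt{h\log\log h}$ and half-width $\asymp h^\alpha$ on Rokhlin towers of height $h$, with the identical constraint $(r-2)/(2(r-1))<\alpha<1-p/2$ governing the integrability of $g$ and $g-g\circ T$. The only cosmetic differences are that the paper uses heights $n_i=2^i$ rather than your doubly exponential $h_k=2^{2^k}$, positions the bump at the top of the tower rather than the middle, and defers the failure argument to \cite{MR1893125} where you spell it out directly.
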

 
We also have a similar counter-example for the $p$-strong law of large numbers.
 
\begin{theo}\label{counter_example_LLN}
 Assume that the dynamical system $(\Omega,\mathcal F,\mu,T)$ is 
aperiodic. Let $1<p<2$ and let $1\leqslant q<p<r$ be real numbers such that 
 $q<(p-1)r/(r-1)$. Then there exists a function $g\in\mathbb L^q$ such that 
 $g-g\circ T\in\mathbb L^r$ but the sequence 
 $(n^{-1/p} S_n(g-g\circ T))_{n\geqslant 1}$ does not converge almost surely to 
 $0$. In particular, $g-g\circ T$ does not satisfy \eqref{Baum-Katz}  
\end{theo}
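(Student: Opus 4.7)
The plan is to reduce the non-convergence statement to the existence of a function $g\in\mathbb L^q\setminus\mathbb L^p$ satisfying $g-g\circ T\in\mathbb L^r$, and then to exhibit such a $g$ as a weighted sum of indicators of Rokhlin towers of suitably chosen heights. The reduction rests on the telescoping identity $S_n(g-g\circ T)=g-g\circ T^n$, together with the observation that $|g|/n^{1/p}\to 0$ almost surely; it thus suffices to show $\limsup_n |g\circ T^n|/n^{1/p}=+\infty$ a.s.\ when $g\notin\mathbb L^p$. This in turn follows from Birkhoff's ergodic theorem applied to the nonnegative function $|g|^p$: if $\mathbb E[|g|^p]=+\infty$, then $(1/N)\sum_{n=1}^N |g\circ T^n|^p\to+\infty$ a.s., hence $\max_{n\leqslant N}|g\circ T^n|^p/N\to+\infty$, and the maximizer $n_N$ must tend to infinity (otherwise $|g\circ T^{n_N}|$ would stay bounded), which yields $|g\circ T^{n_N}|/n_N^{1/p}\to+\infty$ along a subsequence.

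To construct such a $g$, I invoke Rokhlin's lemma, which is available because $T$ is aperiodic: for any integer $H$ and any $\varepsilon>0$ there exists a set $B$ with $B,TB,\dots,T^{H-1}B$ pairwise disjoint and $\mu(\bigsqcup_{j=0}^{H-1}T^jB)\geqslant 1-\varepsilon$. Fix parameters $\lambda>0$ and $\mu>\lambda(r-p)$, and set $c_k:=k^\lambda$, $H_k:=k^\mu$, $\alpha_k:=k^{-\lambda p-1-\mu}$. By an inductive application of Rokhlin's lemma, I build pairwise disjoint Rokhlin towers $T_k:=\bigsqcup_{j=0}^{H_k-1}T^jB_k$ with $\mu(B_k)=\alpha_k$, arranged so that the exterior levels $T^{-1}B_k$ and $T^{H_k}B_k$ also lie outside $\bigcup_j T_j$ (this is the delicate step, discussed below). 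Then I define
\[
g:=\sum_{k\geqslant 1}c_k\mathbf 1_{T_k}.
\]

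The integrability estimates become routine series computations. Pairwise disjointness gives $\mu(T_k)=H_k\alpha_k=k^{-\lambda p-1}$, so
\[
\|g\|_q^q=\sum_k k^{\lambda(q-p)-1}<+\infty\quad\mbox{and}\quad \|g\|_p^p=\sum_k k^{-1}=+\infty,
\]
which means $g\in\mathbb L^q\setminus\mathbb L^p$. The exterior-in-residual property ensures that $g-g\circ T$ takes only the values $c_k$ on $T^{H_k-1}B_k$, $-c_k$ on $T^{-1}B_k$, and zero elsewhere, whence
\[
\|g-g\circ T\|_r^r=2\sum_k c_k^r\alpha_k=2\sum_k k^{\lambda(r-p)-1-\mu}<+\infty
\]
by the choice of $\mu$. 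Combining these facts with the reduction yields the failure of the almost-sure convergence of $n^{-1/p}S_n(g-g\circ T)$ to $0$; the ``in particular'' statement follows at once because the Baum--Katz condition at $\alpha=1/p$ would imply that convergence.

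The main technical obstacle is the inductive construction of the Rokhlin towers with the exterior-in-residual property: at step $k$, given $T_1,\dots,T_{k-1}$ and their exterior levels, one needs to select $B_k$ of measure $\alpha_k$ whose full tower of height $H_k$, together with $T^{-1}B_k$ and $T^{H_k}B_k$, is disjoint from the finitely many previously-built sets. Using aperiodicity, Rokhlin's lemma applied with the larger height $H_k+2$ and a very small exceptional set produces an auxiliary tower whose intersection with the complement of $\bigcup_{j<k}T_j$ has almost full measure; taking $B_k$ to be a suitable subset of the second level of this auxiliary tower (inside that complement) then automatically places the two exterior levels at the first and last levels of the auxiliary tower, which can be arranged to avoid the previous structure. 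This bookkeeping is standard in ergodic theory but forms the most delicate part of the argument.
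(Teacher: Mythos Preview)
Your reduction step is where the argument breaks down. From Birkhoff's theorem you get $(1/N)\sum_{n=1}^N |g\circ T^n|^p\to+\infty$, but the inference ``hence $\max_{n\leqslant N}|g\circ T^n|^p/N\to+\infty$'' is unjustified: the maximum of $N$ nonnegative numbers is at least their average, not $N$ times their average, so all you obtain is $\max_{n\leqslant N}|g\circ T^n|^p\to\infty$, which says nothing about $|g\circ T^{n_N}|/n_N^{1/p}$. Worse, the implication ``$g\notin\mathbb L^p\Rightarrow\limsup_n|g\circ T^n|/n^{1/p}>0$ a.s.'' is actually \emph{false} for ergodic systems, and your own construction illustrates this. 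Your $g$ lies in $\mathbb L^{q'}$ for \emph{every} $q'<p$ (since $\sum k^{\lambda(q'-p)-1}<\infty$ for all such $q'$), and $g-g\circ T\in\mathbb L^{r'}$ for every $r'<p+\mu/\lambda$. Picking any $r'\in(p,2)$ and $q'=\max\{1,(p-1)r'/(r'-1)\}<p$, your $g$ meets the hypotheses of the companion sufficiency result Theorem~\ref{cond_suff_LLN}; hence $g-g\circ T$ satisfies \eqref{Baum-Katz}, and therefore $n^{-1/p}g\circ T^n\to 0$ a.s.\ after all. This is consistent with the red flag that your argument never invokes the hypothesis $q<(p-1)r/(r-1)$: a construction producing $g\in\bigcap_{q'<p}\mathbb L^{q'}$ cannot work here.

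The paper proceeds quite differently. Each Rokhlin tower has height $n_i=2^i$ and, crucially, covers more than half the space; on a thin strip of width $2k_i\approx 2^{i\beta}$ inside it one places a \emph{tent-shaped} function $g_i$ with peak value $n_i^{1/p}$, where $\beta$ is chosen in the interval $\bigl((r-p)/(p(r-1)),\,1-q/p\bigr)$, which is nonempty precisely when $q<(p-1)r/(r-1)$. The large coverage of each tower yields directly $\mu\bigl\{2^{-i/p}\max_{2^i\leqslant l\leqslant 2^{i+1}}g\circ T^l\geqslant 1\bigr\}\geqslant 1/2$, so $n^{-1/p}g\circ T^n$ fails to converge even in probability --- no integrability-based reduction is attempted. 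The tent shape (rather than a constant on the tower) is what makes the coboundary small: $|g_i-g_i\circ T|=n_i^{1/p}/k_i$ on a set of measure $\approx k_i/n_i$, and the lower bound on $\beta$ places this in $\mathbb L^r$ while the upper bound keeps $g$ in $\mathbb L^q$.
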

 
 \subsection{Applications}\label{sec:applications}
 
In Subsection~\ref{sec:sufficient_conditions}, we provided sufficient 
conditions for the functional central limit theorem, the law of the iterated 
logarithm and the $p$-law of large numbers. It is natural to try to apply these 
conditions to some strictly stationary sequences. 

In the case 
of a linear process $f\circ T^k=\sum_{i\geqslant 0}a_i\varepsilon_{k-i}$, where 
$(\varepsilon_i)_{i\in \Z}$ is an i.i.d. centered sequence of random variables in 
$\mathbb L^{p/(p-1),\infty}_0$, $1<p<2$, condition 
\eqref{eq:convergence_E0(Sn)} implies that $f$ has a martingale coboundary decomposition 
in $\mathbb L^q$, $1<q<p$. Since $q/2<1$, the Marcinkievicz-Zygmund and Jensen's inequalities give
the convergence of the series $\sum_{i\geqslant 0}\left(\sum_{k\geqslant i}a_k\right)^{1/2}$, 
which in turn implies the martingale-coboundary decomposition in $\mathbb L^2$, from which 
the invariance principle could have been already deduced. 

This is why we shall focus on some functionals of a particular linear process: the so-called 
Bernoulli shifts. Let $\left(\varepsilon_k\right)_{k\in \Z}$ by an i.i.d. sequence of random variables 
which take the values $0$ and $1$ with probability $1/2$. 
We define for $x=\sum_{k=1}^{+\infty}2^{-k-1}\varepsilon_{-k}$
\begin{equation}
 T^nx:=\sum_{k=1}^{+\infty}2^{-k-1}\varepsilon_{n-k}
\end{equation}
The map $T$ preserves the Lebesgue measure $\lambda$ on the unit interval endowed with the 
Borel $\sigma$-algebra. Given a function $f\colon [0,1]\to \R$, we would like to 
give some sufficient conditions on the regularity of $f$ which guarantee the invariance principle, 
the law of the iterated logarithm and the $p$-law of large numbers.

\begin{propo}\label{propo:application_WIP_LIL}
 Let $f\colon \Omega\to \R$ be a centered function such that for some $p\in (1,2)$, 
 $t^{p/(p-1)}\lambda\ens{\abs f>t}=0$ and for some $\delta>0$, 
 \begin{equation}
  \int_0^{1}\int_0^1\frac{\abs{f(x)-f(y)}^p}{\abs{x-y}}
  \left(\log \frac 1{\abs{x-y}}\right)^{p-1+\delta}\mathrm dx\mathrm dy<\infty
\mbox{ and }
 \end{equation}
 \begin{equation}
  \int_0^{1}\int_0^1\frac{\abs{\widetilde{f}(x)-
 \widetilde{f}(y)}^{\frac p{p-1}}}{\abs{x-y}}
  \left(\log \frac 1{\abs{x-y}}\right)^{\frac 1{p-1}+\delta}
  \mathrm dx\mathrm dy<\infty,
 \end{equation}
 where $\widetilde{f}(x)=f(x)-f(x/2)/2-f((x+1)/2)/2$.
 
Then $f$ satisfies the invariance principle and the functional law of the iterated logarithm.
\end{propo}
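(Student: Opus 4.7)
The plan is to apply Corollary~\ref{cor:WIP} for the invariance principle and Corollary~\ref{cor:cond_suff_LIL} for the functional law of the iterated logarithm, with sub-$\sigma$-algebra $\mathcal M$ generated by the past variables $(\varepsilon_k)_{k\leq 0}$. Since $x$ is determined by $(\varepsilon_{-k})_{k\geq 1}$, the function $f$ is $\mathcal M$-measurable, and the tail hypothesis places $f$ in $\mathbb L^{p/(p-1),\infty}_0$. The Bernoulli shift structure gives the explicit decomposition $T^n x = 2^{-n} x + R_n$, where $R_n := \sum_{j=1}^n 2^{-j}\varepsilon_{n-j}$ is uniformly distributed on $\{j/2^n : 0 \leq j < 2^n\}$ and independent of $\mathcal M$. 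Consequently
\begin{equation*}
\mathbb E[f\circ T^n \mid \mathcal M](x) \;=\; L^n f(x), \qquad Lg(x) := \tfrac12 g(x/2) + \tfrac12 g((x+1)/2),
\end{equation*}
so $L$ is the Perron--Frobenius operator of the doubling map, $\mathbb E[S_n(f) \mid \mathcal M] = \sum_{k=0}^{n-1} L^k f$, and $\widetilde f = (I-L)f$; hence the second integral hypothesis is a condition of exactly the same form as the first, applied to $\widetilde f$ with the exponents $p$ and $p/(p-1)$ interchanged.

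The key quantitative step is a Riemann-sum comparison: for mean-zero $f$, coupling Lebesgue measure on $[0,1]$ with its dyadic discretization yields
\begin{equation*}
\|L^n f\|_p^p \;\leq\; 2^n \iint_{|u-y|\leq 2^{-n}} |f(u) - f(y)|^p \, du \, dy.
\end{equation*}
Because the kernel $t \mapsto t^{-1}(\log 1/t)^{p-1+\delta}$ is decreasing near $0$, the first hypothesis gives $\iint_{|u-y|\leq 2^{-n}}|f(u)-f(y)|^p \, du\,dy = o(2^{-n} n^{-(p-1+\delta)})$, whence $\|L^n f\|_p = o(n^{-(p-1+\delta)/p})$. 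Applying the same comparison to $\widetilde f$ produces $\|L^n \widetilde f\|_{p/(p-1)} = o(n^{-(1+(p-1)\delta)/p})$.

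It remains to verify the two convergence hypotheses of the corollaries. First, $G_n := \mathbb E[S_n(f)\mid\mathcal M] = \sum_{k=0}^{n-1} L^k f$ has to converge in $\mathbb L^{p,\infty}$ (respectively in $\mathbb L^p$ for the functional law of the iterated logarithm); the telescoping identity $f - L^N f = \sum_{j=0}^{N-1} L^j \widetilde f$ rewrites $G_N = \sum_{j\geq 0}\min(j+1,N)L^j\widetilde f$, and combining this with the two decay estimates above via summation by parts gives a Cauchy estimate in the target space. Second, to control $(I-U)G_n = G_n - G_n\circ T$ I would use the identity
\begin{equation*}
G_n - G_n \circ T \;=\; (f - L^n f) \;-\; \sum_{k=0}^{n-1} D_k, \qquad D_k := U L^k f - L^{k+1} f,
\end{equation*}
where $(D_k)$ is a sequence of reverse martingale differences with respect to the filtration $(T^{-k}\mathcal M)_k$. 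Burkholder's inequality in $\mathbb L^{p/(p-1)}$, together with the decay of $\|L^k f\|_{p/(p-1)}$ coming from both hypotheses, handles the martingale contribution.

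The main obstacle is that the crude bound $\sum_n \|L^n f\|_p < \infty$ only holds when $\delta > 1$; to reach the full range $\delta > 0$ it is essential to use both integral conditions \emph{together} at the Abel-summation step, since the decay rate produced by either one alone is insufficient to ensure Cauchy behaviour. Once convergence is established in the appropriate spaces, the hypotheses of Corollary~\ref{cor:WIP} and Corollary~\ref{cor:cond_suff_LIL} are satisfied and the invariance principle and the functional law of the iterated logarithm follow simultaneously.
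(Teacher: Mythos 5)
Your overall strategy is the same as the paper's: reduce to Corollaries~\ref{cor:WIP} and \ref{cor:cond_suff_LIL} with $\mathcal M=\sigma(\varepsilon_k,\,k\leqslant 0)$, identify $\mathbb E[f\circ T^n\mid\mathcal M]=L^nf$ and $(I-U)\mathbb E[f\circ T^n\mid\mathcal M]$ with the analogous expression for $\widetilde f=(I-L)f$, and use the Riemann-sum comparison $\norm{L^nf}_q^q\leqslant 2^n\iint_{\abs{x-y}\leqslant 2^{-n}}\abs{f(x)-f(y)}^q\,\mathrm dx\,\mathrm dy$, which is exactly inequality \eqref{eq:norm_EXn}. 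Up to that point the proposal is sound.

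The gap is in the passage from the integral hypotheses to the convergence of $\mathbb E[S_n(f)\mid\mathcal M]$ and $(I-U)\mathbb E[S_n(f)\mid\mathcal M]$ in the required spaces. You extract from each hypothesis only the pointwise rate $\norm{L^nf}_p=o(n^{-(p-1+\delta)/p})$ (and its analogue for $\widetilde f$); you correctly observe that these are not summable unless $\delta>1$, and then propose to reach the full range $\delta>0$ by ``using both integral conditions together at the Abel-summation step''. That step is never carried out, and with only the two pointwise rates it cannot work: for $p=3/2$ and $\delta=1/10$ you have $\norm{L^nf}_{3/2}=o(n^{-2/5})$ and $\norm{L^n\widetilde f}_{3}=o(n^{-7/10})$, and sequences with precisely these decays need not have convergent partial sums; nor does the representation $G_N=\sum_{j}\min(j+1,N)L^j\widetilde f$ help, since the weights grow linearly while $\sum_j j\,\norm{L^j\widetilde f}$ diverges at these rates. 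What you are discarding is that the hypothesis controls a whole weighted series, not just individual terms: setting $a_n^q:=2^n\iint_{\abs{x-y}\leqslant2^{-n}}\abs{f(x)-f(y)}^q\,\mathrm dx\,\mathrm dy$, the elementary bound $\sum_{n\leqslant\log_2(1/t)}n^{q-1+\delta}2^n\leqslant 2t^{-1}\left(\log_2(1/t)\right)^{q-1+\delta}$ shows that the integral condition yields $\sum_n n^{q-1+\delta}a_n^q<\infty$, and a single application of H\"older's inequality with exponents $q$ and $q/(q-1)$ (the conjugate series $\sum_n n^{-(q-1+\delta)/(q-1)}$ converges precisely because $\delta>0$) gives $\sum_n a_n<\infty$, hence $\sum_n\norm{L^nf}_q<\infty$ for every $\delta>0$. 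This is Lemma~\ref{lem:sufficient_cond_Bernoulli} of the paper, applied with $q=p$ to $f$ and with $q=p/(p-1)$ to $\widetilde f$; once absolute summability is in hand, the convergence of the partial sums is immediate and your reverse-martingale/Burkholder treatment of $(I-U)G_n$ becomes unnecessary, since $(I-U)\mathbb E[f\circ T^n\mid\mathcal M]$ is already of the same form with $f$ replaced by $\widetilde f$.
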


\begin{propo}\label{propo:application_LLN}
 Let $p\in (1,2)$. Let $f\colon \Omega\to \R$ be a centered function such that for some 
 $r\in (p,1)$, $f$ belongs to $\mathbb L^r$ and for some $\delta>0$, 
 \begin{equation}
  \int_0^{1}\int_0^1\frac{\abs{f(x)-f(y)}^q}{\abs{x-y}}
  \left(\log \frac 1{\abs{x-y}}\right)^{q-1+\delta}\mathrm dx\mathrm dy<\infty
\mbox{ and }
 \end{equation}
 \begin{equation}
  \int_0^{1}\int_0^1\frac{\abs{\widetilde{f}(x)-
  \widetilde{f}(y)}^r}{\abs{x-y}}
  \left(\log \frac 1{\abs{x-y}}\right)^{r-1+\delta}
  \mathrm dx\mathrm dy<\infty,
 \end{equation}
 where $q:=\max\ens{1,(p-1)r /(r-1)}$ and $\widetilde{f}(x)=f(x)-f(x/2)/2-f((x+1)/2)/2$.
 Then $f$ satisfies the $p$-strong law of large numbers.
\end{propo}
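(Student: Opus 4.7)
The plan is to apply Corollary~\ref{cor:BK}, following the same scheme as in the proof of Proposition~\ref{propo:application_WIP_LIL}. Take $\mathcal{M}$ to be the $\sigma$-algebra generated by the past coordinates $\varepsilon_k$, $k\leq -1$, so that $T\mathcal{M}\subset \mathcal{M}$ and the $\mathbb{L}^r$-function $f$ is $\mathcal{M}$-measurable. A direct computation in the Bernoulli-shift setup shows that for any $\mathcal{M}$-measurable $h$,
\[
  \mathbb{E}\bigl[h \circ T \mid \mathcal{M}\bigr](x) \;=\; \tfrac12 h(x/2) + \tfrac12 h((x+1)/2) \;=:\; (Kh)(x),
\]
so that $\widetilde{f}=(I-K)f$ and, iterating, $\mathbb{E}[S_n(f)\mid\mathcal{M}] = \sum_{k=0}^{n-1} K^k f$ with $K^k f(x) = 2^{-k}\sum_{i=0}^{2^k-1} f((x+i)/2^k)$.

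The central step is the smoothing estimate. Using the explicit formula for $K^k f$, rewriting each term as a difference against the average of $f$ over the corresponding dyadic sub-interval (allowed since $\int f = 0$), and applying Jensen's inequality yields
\[
  \norm{K^k f}_q^q \;\leq\; \int_0^1\!\!\int_0^1 \mathbf{1}_{\{|x-y|\leq 2^{-k}\}} \, \frac{|f(x)-f(y)|^q}{|x-y|}\, dx\, dy,
\]
and similarly for $\norm{K^k \widetilde{f}}_r^r$ with $\widetilde{f}$ and $r$ in place of $f$ and $q$. Decomposing $\{|x-y|\leq 2^{-k}\}$ into the dyadic annuli $\{2^{-j-1} < |x-y| \leq 2^{-j}\}_{j \geq k}$ and exploiting the logarithmic factors $(\log 1/|x-y|)^{q-1+\delta}$, respectively $(\log 1/|x-y|)^{r-1+\delta}$, in the integral hypothesis converts these inequalities into quantitative decay estimates for $\norm{K^k f}_q$ and $\norm{K^k \widetilde{f}}_r$.

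To verify the two hypotheses of Corollary~\ref{cor:BK}, I would combine these estimates as follows. The telescoping identity $f - K^N f = \sum_{k=0}^{N-1} K^k \widetilde{f}$ yields $K^N f \to 0$ in $\mathbb{L}^r$ via the bound on $\norm{K^k \widetilde{f}}_r$, and the identity
\[
  (I-U)\mathbb{E}[S_n(f)\mid\mathcal{M}] \;=\; f - K^n f - \sum_{k=1}^{n} d_k, \qquad d_k := K^{k-1} f \circ T - K^k f,
\]
together with Burkholder's inequality applied to the martingale differences $(d_k)$ with respect to the filtration $\mathcal{F}_k = T^{-k}\mathcal{M}$, reduces \eqref{eq:convergence_(I-U)E0(Sn)_BK} to a quadratic-variation bound that in turn follows from the smoothing estimate on $\widetilde{f}$. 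Condition \eqref{eq:convergence_E0(Sn)_BK} is verified analogously, showing that $\sum_k K^k f$ converges in $\mathbb{L}^q$ for $q = \max\{1,(p-1)r/(r-1)\}$.

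The main technical obstacle is the second step: extracting from the dyadic-annulus decomposition decay rates on $\norm{K^k f}_q$ and $\norm{K^k \widetilde{f}}_r$ that are sharp enough to give convergence of the relevant series in the prescribed norms. The exponents $q-1+\delta$ and $r-1+\delta$ in the logarithmic weights are calibrated precisely for this purpose, and the strict positivity $\delta>0$ provides the necessary slack to absorb the logarithmic losses incurred by the dyadic summation.
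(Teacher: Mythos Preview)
Your overall plan---apply Corollary~\ref{cor:BK} by converting the two integral hypotheses into summability of $\norm{\mathbb E[f\circ T^n\mid\mathcal M]}_q$ and $\norm{(I-U)\mathbb E[f\circ T^n\mid\mathcal M]}_r$---is exactly the paper's. The smoothing bound you state for $K^k f$ and the H\"older argument exploiting the logarithmic weight are the same as well (this is the content of Lemma~\ref{lem:sufficient_cond_Bernoulli}).

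The genuine gap is in your verification of \eqref{eq:convergence_(I-U)E0(Sn)_BK}. The sequence $d_k=K^{k-1}f\circ T-K^k f$ is \emph{not} a martingale difference sequence for the filtration $\mathcal F_k=T^{-k}\mathcal M$: since $K^{k-1}f$ is $\mathcal M$-measurable, each $d_k$ is $T^{-1}\mathcal M$-measurable, hence already $\mathcal F_{k-1}$-measurable for every $k\geq 2$, and $\mathbb E[d_k\mid\mathcal F_{k-1}]=d_k$ rather than $0$. The $d_k$ are not even pairwise orthogonal in general, so Burkholder's inequality does not apply and the quadratic-variation reduction you describe collapses.

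The paper avoids this detour entirely. It records that the single term $(I-U)\,\mathbb E[f\circ T^n\mid\mathcal M]$ admits the \emph{same} Maxwell--Woodroofe representation as $\mathbb E[f\circ T^n\mid\mathcal M]$, but with $\widetilde f$ in place of $f$:
\[
 (I-U)\,\mathbb E\bigl[f\circ T^n\mid\mathcal M\bigr](x)
 =2^{-n}\sum_{j=0}^{2^n-1}\int_0^1\Bigl[\widetilde f\Bigl(\tfrac{x+j}{2^n}\Bigr)-\widetilde f\Bigl(\tfrac{y+j}{2^n}\Bigr)\Bigr]\,dy.
\]
Thus $(I-U)\mathbb E[f\circ T^n\mid\mathcal M]$ obeys the identical smoothing estimate with $\widetilde f$ replacing $f$ and exponent $r$ replacing $q$, and the second integral hypothesis gives $\sum_{n\geq 1}\norm{(I-U)\mathbb E[f\circ T^n\mid\mathcal M]}_r<\infty$ by the same H\"older/dyadic computation used for the first condition. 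No martingale machinery is needed; the role of $\widetilde f$ is precisely to absorb the operator $(I-U)$ at the level of each term, which is the observation your argument is missing.
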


\section{Proofs} 
 
If $h\colon\Omega\to\R$ is a measurable function, we define 
$M^*(h):=\sup_{N\geqslant 1}N^{-1}\abs{S_N(h)}$.

The following lemma about Birkhoff averages will be used in the proof. 
 
\begin{lem}\label{weak_lp_maximal_function} 
 Let $q>1$ and let $h\colon\Omega\to R $ be a measurable function. 
 \begin{enumerate}[label=(\roman*)]
  \item\label{weak_lq_belong} If $h$ belongs to $\mathbb L^{q,\infty }_0$ 
  then the function $M^*(h)$ belongs to $\mathbb L^{q,\infty }_0 $;
 \item\label{lq_lq_inequality} if $h$ belongs 
 to $\mathbb L^q$, then so does $M^*(h)$.
 \end{enumerate}

\end{lem}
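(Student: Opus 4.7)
The plan is to reduce both parts to the classical Hopf--Wiener maximal ergodic inequality
\begin{equation*}
 \mu\ens{M^*(h) > t} \leq \frac{\norm{h}_1}{t}, \qquad t>0,\ h\in\mathbb{L}^1,
\end{equation*}
combined with the trivial estimate $\norm{M^*(h)}_\infty \leq \norm{h}_\infty$. Part~\ref{lq_lq_inequality} is then an immediate application of Marcinkiewicz's interpolation theorem between these two endpoints.

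For part~\ref{weak_lq_belong}, I would use a Calder\'on-type truncation. Introduce
\begin{equation*}
 \varphi(s) := \sup_{u \geq s} u^q \mu\ens{\abs{h} > u},
\end{equation*}
which tends to $0$ as $s\to+\infty$ by the assumption $h\in\mathbb{L}^{q,\infty}_0$. For $s>0$ split $h = h_s + h^s$ with $h_s := h\,\mathbf{1}_{\ens{\abs h \leq s}}$ and $h^s := h-h_s$. Since $\abs{h_s} \leq s$ pointwise, $M^*(h_s) \leq s$, and subadditivity of $M^*$ yields the implication $M^*(h) > 2s \;\Rightarrow\; M^*(h^s) > s$.

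The layer-cake identity together with the pointwise bound $\mu\ens{\abs h > u} \leq \varphi(s) u^{-q}$ (valid for $u\geq s$) gives
\begin{equation*}
 \norm{h^s}_1 = s\,\mu\ens{\abs h > s} + \int_s^{+\infty}\mu\ens{\abs h > u}\,\dd u \leq \frac{q}{q-1}\,\varphi(s)\,s^{1-q}.
\end{equation*}
Applying weak type $(1,1)$ to $h^s$ at level $s$ then produces
\begin{equation*}
 \mu\ens{M^*(h) > 2s} \leq \frac{\norm{h^s}_1}{s} \leq \frac{q}{q-1}\,\varphi(s)\,s^{-q},
\end{equation*}
so that $(2s)^q\,\mu\ens{M^*(h) > 2s} \leq \frac{2^q q}{q-1}\,\varphi(s) \to 0$, which is the membership $M^*(h) \in \mathbb{L}^{q,\infty}_0$.

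The only genuine difficulty is the bookkeeping: one has to match the truncation threshold $s$ with the level $t=2s$ at which weak-$(1,1)$ is applied, and check that the resulting constants transport the ``little-oh'' decay of $t^q\mu\ens{\abs h>t}$ to the corresponding decay for $M^*(h)$. Beyond the Hopf--Wiener maximal theorem and the elementary truncation, no deeper input is required.
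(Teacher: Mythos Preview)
Your argument is correct. Both parts go through as written: the truncation $h=h_s+h^s$ with $M^*(h_s)\leqslant s$ and the layer-cake estimate for $\norm{h^s}_1$ are standard, and the weak-$(1,1)$ Hopf inequality is applicable because $h^s\in\mathbb L^1$ whenever $h\in\mathbb L^{q,\infty}$ with $q>1$.

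Your route differs from the paper's. The paper works throughout with the \emph{refined} form of the maximal ergodic theorem,
\[
 t\,\mu\ens{M^*(h)\geqslant t}\leqslant \mathbb E\bigl[\abs{h}\,\mathbf 1\ens{M^*(h)\geqslant t}\bigr],
\]
and for part~\ref{weak_lq_belong} bounds the right-hand side directly in terms of $\norm{h\,\mathbf 1(A_t)}_{q,\infty}$ with $A_t=\ens{M^*(h)\geqslant t}$, obtaining the quantitative inequality
\[
 t\,\mu(A_t)^{1/q}\leqslant \frac q{q-1}\,\norm{h\,\mathbf 1(A_t)}_{q,\infty},
\]
and then shows $\norm{h\,\mathbf 1(A_t)}_{q,\infty}\to 0$. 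For part~\ref{lq_lq_inequality} the paper multiplies the refined inequality by $t^{q-2}$ and integrates (the Doob/Hardy--Littlewood computation) rather than invoking Marcinkiewicz. Your approach is the more familiar harmonic-analysis template (truncation plus interpolation from the weak-$(1,1)$ endpoint); the paper's approach avoids the truncation by exploiting the localisation to $A_t$ in the maximal inequality, and as a by-product yields the displayed estimate with the clean constant $q/(q-1)$, which is cited again later in the paper. Your truncation argument also delivers $\norm{M^*(h)}_{q,\infty}\leqslant C_q\norm{h}_{q,\infty}$ (with $C_q=2(q/(q-1))^{1/q}$), so nothing downstream is lost.
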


\begin{proof}
 By the maximal ergodic theorem, we have for each positive $t$, 
 \begin{equation}\label{maximal_ergodic_theorem}
  t \cdot\mu\ens{M^*(h) \geqslant t}
  \leqslant \mathbb E\left[\abs h\cdot\mathbf 1
  \ens{M^*(h)  \geqslant t} \right].
 \end{equation}
 \begin{enumerate}[label=(\roman*)]
  \item  The expectation can be bounded by 
 \begin{equation}
  \int_0^{+\infty}\min\ens{\mu(A_t),
  s^{-q}\norm{h\cdot\mathbf 1(A_t)}_{q,\infty}   } \mathrm ds,
 \end{equation}
where 
\begin{equation}
 A_t:=\ens{M^*(h) \geqslant t}. 
\end{equation}
 
Therefore, we infer the bound 
\begin{align}
  \mathbb E\left[\abs h\cdot\mathbf 1(A_t) \right]&\leqslant 
 \mu(A_t)^{1-1/q}\norm{h\mathbf 1(A_t)}_{q,\infty}+ \\
 &+
 \norm{h\cdot\mathbf 1(A_t)}_{q,\infty}^q\int_{\norm{h\cdot\mathbf 1(A_t)}_{q,\infty} 
\mu(A_t)^{-1/q} }^{+\infty}s^{-q}\mathrm ds\nonumber\\
&=\mu(A_t)^{1-1/q}\norm{h\cdot\mathbf 1(A_t)}_{q,\infty}+ \\
&+ \norm{h\cdot\mathbf 1(A_t)}_{q,\infty}^q\frac{(\norm{h\cdot\mathbf 1(A_t)}_{q,\infty} 
\mu(A_t)^{-1/q})^{1-q} }{q-1}\nonumber \\ 
&=\mu(A_t)^{1-1/q}\norm{h\cdot\mathbf 1(A_t)}_{q,\infty}+
\frac{\norm{h\cdot\mathbf 1(A_t)}_{q,\infty}\mu(A_t)^{1-1/q} }{q-1}\\ 
&=\frac q{q-1}\mu(A_t)^{1-1/q}\norm{h\cdot\mathbf 1(A_t)}_{q,\infty}.
\end{align}
 Plugging this into \eqref{maximal_ergodic_theorem}, we obtain 
 \begin{equation}\label{estimate_maximal_function}
  t\cdot\mu\ens{M^*(h)  \geqslant t}^{1/q}
  \leqslant \frac q{q-1}\norm{h\cdot\mathbf 1(A_t)}_{q,\infty},
 \end{equation}
 hence is is enough to prove that 
 \begin{equation}
  \lim_{t\to  +\infty}\sup_{ s\geqslant 0}  s^q\mu\left(\ens{\abs h\geqslant s}\cap 
  A_t\right)=0. 
 \end{equation}
To this aim, fix a positive $\varepsilon$; by assumption, there 
exists a positive real number $s_0$ such that for $s\geqslant s_0$, we have 
$s^q\mu\ens{\abs h\geqslant s}\leqslant \varepsilon$, hence 
\begin{equation}
 \sup_{ s\geqslant 0}  s^q\mu\left(\ens{\abs h\geqslant s}\cap 
  A_t\right)\leqslant \max\ens{\varepsilon,s_0^p\mu(A_t)}, 
\end{equation}
 which is smaller than $\varepsilon$ for $t$ large enough.
 
 \item This follows by multiplying \eqref{maximal_ergodic_theorem} 
 by $t^{q-2}$, integrating over $[0,+\infty)$ with respect to the Lebesgue 
 measure and switching the integrals.
 \end{enumerate}
 
 This concludes the proof of Lemma~\ref{weak_lp_maximal_function}. 
\end{proof}
 
We now give the proofs of the main results, which combine 
Lemma~\ref{weak_lp_maximal_function} with the 
ideas of \cite{MR1893125}.
 
\subsection{Proof of sufficient conditions}\label{sec:proof_sufficient_conditions} 
 
\begin{proof}[Proof of Theorem~\ref{cond_suff_WIP}] 
 In view of Theorem~\ref{equivalence_theorem}, we have to show that
 the sequence $\left(n^{-1/2}\displaystyle
 \max_{1\leqslant j\leqslant n}
 \abs{g\circ T^j}\right)_{n\geqslant 1}$ converges to $0$ in probability.  
 
 Let $\varepsilon$ be a positive fixed number. Let $k,n$ be positive integers 
 such that $k<n$. Denoting 
 $p_n:=\mu\ens{\max_{1\leqslant j\leqslant n}\abs{g\circ T^j}>2\varepsilon n^{1/2}   }$,
 the following estimates take place:
 \begin{align*}
  p_n&\leqslant 
 \mu\ens{\max_{1\leqslant i\leqslant \left[\frac nk\right]+1}
 \max_{ik\leqslant j<(i+1)k}\abs{g\circ T^{ik}}+\abs{g\circ T^j - g\circ T^{ik} } > 
 2\varepsilon n^{1/2}} \\
 &\leqslant \mu\ens{\max_{1\leqslant i\leqslant \left[\frac nk\right]+1}\abs{g\circ T^{ik} }
 >\varepsilon\sqrt n}+ \sum_{i=1}^{\left[\frac nk\right]+1}
\mu\ens{\max_{ik\leqslant j<(i+1)k}\abs{g\circ T^j - g\circ T^{ik} }> 
 \varepsilon n^{1/2}} \\
 &\leqslant \left(\left[\frac nk\right]+1\right)\mu\ens{\abs g>\varepsilon n^{1/2}}+
 \left(\left[\frac nk\right]+1\right)\mu\ens{\max_{0\leqslant j<k}\abs{g\circ T^j-g}>\varepsilon 
 n^{1/2} }\\ 
 &\leqslant \left(\left[\frac nk\right]+1\right)\mu\ens{\abs g>\varepsilon n^{1/2}}+
 \left(\left[\frac nk\right]+1\right)\mu\ens{\max_{0\leqslant j<k}\frac 1j
 \abs{S_j(g-g\circ T)}>\varepsilon 
 \frac{n^{1/2}}k  }.
 \end{align*}
 This yields 
 \begin{equation}
  \label{bound_p_n}
  p_n\leqslant \left(\left[\frac nk\right]+1\right)\mu\ens{\abs g>\varepsilon n^{1/2}}+ 
  \left(\left[\frac nk\right]+1\right)\mu\ens{M^*(g-g\circ T)>\varepsilon \frac{n^{1/2}}k  }.
 \end{equation}
 
 By the assumption on $p$ and $r$, the inequality
\begin{equation} 
\frac{r/2-1}{r-1}= \frac{r-1-r/2}{r-1}=1-\frac{r}{2(r-1)}\geqslant 1-\frac p2  
\end{equation}
takes place, hence we may choose a number $\alpha$ such that 
\begin{equation}\label{cond_alpha}
 1-\frac p2 \leqslant \alpha \leqslant \frac{r/2-1}{r-1}.
\end{equation}
 We now use \eqref{bound_p_n} with $k:=\left[ n^\alpha\right]$. This yields, 
 for some constant $c$ depending only on $p$ and $r$:
\begin{equation}
 p_n\leqslant c\cdot n^{1-\alpha}  \mu\ens{\abs g>\varepsilon n^{1/2}} 
 +c\cdot n^{1-\alpha}\mu\ens{M^*(g-g\circ T)>\varepsilon n^{1/2-\alpha}  },
\end{equation}
 and, by \eqref{cond_alpha}, 
\begin{align}
  p_n&\leqslant cn^{p/2}  \mu\ens{\abs g>\varepsilon n^{1/2}}+ \\ 
  &+cn^{1-\alpha-r(1/2-\alpha)} n^{(1/2-\alpha)r} \mu\ens{M^*(g-g\circ T)
  >\varepsilon n^{1/2-\alpha}  }\nonumber\\
 &=cn^{p/2}  \mu\ens{\abs g>\varepsilon n^{1/2}} +\\
 &+cn^{1-r/2+\alpha(r-1)} n^{(1/2-\alpha)r} 
 \mu\ens{M^*(g-g\circ T)
 >\varepsilon n^{1/2-\alpha}  }\nonumber\\ 
 &\leqslant cn^{p/2}  \mu\ens{\abs g>\varepsilon n^{1/2}}+c n^{(1/2-\alpha)r} 
 \mu\ens{M^*(g-g\circ T)>\varepsilon n^{1/2-\alpha}  }.
\end{align}
 Since $g\in\mathbb L^{p,\infty}_0$ and $g-g\circ T\in\mathbb L^{r,\infty}_0$, 
 we conclude by item \ref{weak_lq_belong} of Lemma~\ref{weak_lp_maximal_function} 
 that the sequence $(p_n)_{n\geqslant 1}$ converges to $0$. 
 
 This concludes the proof of Theorem~\ref{cond_suff_WIP}. 
\end{proof}
  
\begin{proof}[Proof of Corollary~\ref{cor:WIP}]
 Condition \eqref{eq:convergence_E0(Sn)} implies by \cite{MR1198662} that 
 $f$ may be written as $f=m+g-g\circ T$, where $(m\circ T^i)_{i\geqslant 0}$ 
 is a martingale differences sequence with respect to the filtration $(T^{-i}\mathcal M)_{i\geqslant 0}$. 
 Since $f$ is $\mathcal M$-measurable, the function $g$ may be written as $\sum_{i\geqslant 0}
 \mathbb E\left[U^if\mid T\mathcal M\right]$. Thus, by condition \eqref{eq:convergence_E0(Sn)}, 
 we derive that $g\in \mathbb L^{p,\infty}$ and since each term in the series defining 
 $g$ belongs to $\mathbb L^{p,\infty}_0$, we derive that $g$ belongs to $\mathbb L^{p,\infty}_0$. 
 Similarly, by condition \eqref{eq:convergence_(I-U)E0(Sn)}, we infer that 
 $g-g\circ T$ belongs to $\mathbb L^{p/(p-1),\infty}_0$. Since 
 $f\in \mathbb L^{p/(p-1),\infty}_0$, we have $m\in \mathbb L^{p/(p-1),\infty}_0$,
 and accounting the inequality $p/(p-1)>2$, we conclude that $m$ is square integrable. 
 The proof is complete since we showed that $g$ satisfies the conditions of 
 Theorem~\ref{cond_suff_WIP}.
\end{proof}
  
\begin{proof}[Proof of Proposition~\ref{propo:comparison}]
 
 We use the contruction given in \cite{MR2446326}. 
 There exists two independent and $T$-invariant sub-$\sigma$-algebras 
 $\mathcal B$ and $\mathcal C$. We consider a $\mathcal B$-measurable function 
$e_0\colon \Omega\to \ens{-1,1}$ such that $\mu(\ens{e_0=1})=\mu(\ens{e_0=-1})=1/2$, 
and define $e_i:=e_0\circ T^i$, $i\in \Z$, and $\mathcal M:=\mathcal C\vee 
\sigma\ens{e_i,i\leqslant 0}$ (which satisfies $T\mathcal M\subset\mathcal M$). We introduce 
three sequences: $(\theta_k)_{k\geqslant 1}\subset (0,+\infty)$, $(\rho_k)_{k\geqslant 1}
\subset (0,1)$ a decreasing sequence such that $\sum_{k\geqslant 1}\rho_k<1$ and 
an increasing sequence of integers $(N_k)_{k\geqslant 1}$. Once these sequences are 
fixed, we choose a decreasing sequence $(\varepsilon_k)_{k\geqslant 1}\subset (0,1)$ 
such that 
\begin{equation}
 \sum_{k\geqslant 1}\theta_kN_k\varepsilon_k^{1/p}.
\end{equation}
We now consider for each fixed $k\geqslant 1$ a set $A_k\in \mathcal C$ such that 
\begin{enumerate}
 \item the sets $A_k$ are mutually disjoint;
 \item $\left(1-\sum_{i\geqslant 1}\rho_i\right)\frac{\rho_k}2
 \leqslant \mu(A_k)\leqslant \rho_k$ for all $k\geqslant 1$;
 \item for each $k\geqslant 1$ and all 
 $i,j\in \ens{0,\dots,N_k+1}$, $\mu(T^{-i}A_k\Delta T^{-j}A_k)\leqslant \varepsilon_k$.
\end{enumerate}
The existence of such a sequence of sets as well as that of $\mathcal B$ and $\mathcal C$ is explained 
in \cite{MR2446326}.
 
 The function $f$ is defined by 
 \begin{equation}\label{eq:definition_f}
  f=\sum_{k=1}^{+\infty}\theta_ke_{-N_k}\mathbf 1(A_k).
 \end{equation}
Assume that the sequence $(\theta_k)_{k\geqslant 1}$ is increasing and 
$\theta_k\to +\infty$. 
The function $f$ belongs to $\mathbb L^{p/(p-1),\infty}_0$ if 
\begin{equation}
 \lim_{k\to+\infty }\theta_{k+1}^{p/(p-1)}\sum_{i\geqslant k}\rho_i=0.
\end{equation}
Now, we have  
\begin{multline}
 \mathbb E\left[S_n(f)\mid \mathcal M\right]
 =\sum_{k=1}^{+\infty}\theta_k\sum_{i=1}^{\min\ens{n,N_k}}e_{-N_k+i}\mathbf 1(A_k)+\\
 \sum_{k=1}^{+\infty}\theta_k\sum_{i=1}^{\min\ens{n,N_k}}e_{-N_k+i}
 \left(\mathbf 1(T^{-i}A_k\setminus A_k)-\mathbf 1(A_k\setminus T^{-i}A_k)\right),
\end{multline}
and since 
\begin{equation}
 \norm{\sum_{i=1}^{\min\ens{n,N_k}}e_{-N_k+i}
 \left(\mathbf 1(T^{-i}A_k\setminus A_k)-\mathbf 1(A_k\setminus T^{-i}A_k)\right)}_p
 \leqslant N_k\varepsilon_k^{1/p},
\end{equation}
the sequence $\left(\mathbb E\left[S_n(f)\mid \mathcal M\right]\right)_{n\geqslant 1}$ 
converges in $\mathbb L^p$ if 
\begin{equation}\label{eq:condition_counter_example_MCDLp}
 \lim_{m,n\to \infty}
 \mathbb E\abs{\sum_{k=1}^{+\infty}\theta_k\sum_{i=\min\ens{m,N_k}}^{\min\ens{n,N_k}}
 e_{-N_k+i}\mathbf 1(A_k)}^p
 =0.
\end{equation}
Since the family $(A_k)_{k\geqslant 1}$ is disjoint, \eqref{eq:condition_counter_example_MCDLp} 
is equivalent to 
\begin{equation}\label{eq:condition_counter_example_MCDLp_bis}
 \lim_{m,n\to \infty}
 \sum_{k=1}^{+\infty}\theta_k^p\mathbb E\abs{\sum_{i=\min\ens{m,N_k}}^{\min\ens{n,N_k}}
 e_{-N_k+i}}^p\rho_k
 =0,
\end{equation}
which is implied (by Marcinkievicz-Zygmund inequality) by 
\begin{equation}\label{eq:condition_counter_example_MCDLp_finale}
 \sum_{k=1}^{+\infty}\theta_k^pN_k^{p/2}\rho_k.
\end{equation}
Notice also that 
\begin{align*}
 (I-U) \mathbb E\left[S_n(f)\mid \mathcal M\right]&=
 \sum_{k=1}^{+\infty}\theta_k(I-U)
 \sum_{i=1}^{\min \ens{n,N_k}}e_{-N_k+i}\mathbf 1(T^{-i}A_k)\\
 &=\sum_{k=1}^{+\infty}\theta_k\left(e_{-N_k+1}\mathbf 1(T^{-1}A_k)- 
 e_{-N_k+\min \ens{n,N_k}}\mathbf 1(T^{-\min \ens{n,N_k}}A_k)\right),
\end{align*}
hence the sequence $\left((I-U) \mathbb E\left[S_n(f)\mid T\mathcal M\right]\right)_{n\geqslant 1}$
converges in $\mathbb L^{p/(p-1),\infty}$ as long as 
\begin{equation}\label{eq:condition_counter_example_MCDLp'_finale}
 \lim_{k\to +\infty}\theta_{k+1}^{p/(p-1)}\sum_{i\geqslant k}\rho_i=0.
\end{equation}
By Proposition~3 of \cite{MR2446326}, the function $f$ defined by \eqref{eq:definition_f}
satisfies \eqref{eq:Dedecker_and_Rio_condition} if and only if 
$\sum_{k=1}^{+\infty}\theta_k^2\sqrt{N_k}\rho_k<\infty$. 
We thus have to takes sequences $(\theta_k)_{k\geqslant 1}$, $(\rho_k)_{k\geqslant 1}$
and $(N_k)_{k\geqslant 1}$ such that $(\theta_k)_{k\geqslant 1}$ is increasing, 
\eqref{eq:condition_counter_example_MCDLp_finale} and \eqref{eq:condition_counter_example_MCDLp'_finale} 
hold but $\sum_{k=1}^{+\infty}\theta_k^2\sqrt{N_k}\rho_k=+\infty$. Such a selection is possible; for 
example, take 
\begin{equation}
 \theta_k:=\frac{2^{k(p-1)/p}}{(\log k)^{2/p}}, \quad 
 N_k:=\left[\frac{2^{2k(2-p)/p}}{k^{2/p}}\right],\quad  \rho_k:=2^{-k}.
\end{equation}

\end{proof}

\begin{proof}[Proof of Theorem~\ref{cond_suff_LIL}] 
 In view of Theorem~\ref{equivalence_theorem}, we have to prove that  
 the convergence in \eqref{equivalence_LIL} takes place. 
 Let $\alpha\in (0,1)$ be a number which will be specified later. We define 
 \begin{equation}
  m_j:=\sum_{i=1}^{j-1}\left[i^\alpha\right], j\geqslant 1.  
 \end{equation}
 Notice that for some constant $\kappa$ depending only on $\alpha$, 
 we have 
 \begin{equation}\label{bound_m_j} 
  \frac{j^{\alpha +1}}\kappa\leqslant m_j\leqslant \kappa j^{\alpha +1}, j\geqslant 1.
 \end{equation}

 By the Borel-Cantelli lemma, we have to prove the convergence of the series 
 \begin{equation}\label{definition_pj}
   \sum_{j=1}^{+\infty}p_j,\mbox{ with }p_j:=\mu\ens{\max_{0\leqslant i\leqslant 
   \left[j^\alpha\right]}
 \frac 1{\sqrt{\itlog{m_j} } }\abs{g\circ T^{m_j+i} }>\varepsilon  }  
 \end{equation}
  for each positive $\varepsilon$.
 To this aim, we start from the inequalities
\begin{align*}
p_j&\leqslant \mu
 \ens{\frac 1{\sqrt{\itlog{m_j} } }\abs{g\circ T^{m_j} }>\varepsilon/2}+\\
& +\mu\ens{\max_{0\leqslant i\leqslant \left[j^\alpha\right]}
 \frac 1{\sqrt{\itlog{m_j} } }\abs{g\circ T^{m_j+i}-T^{m_j}  }>\varepsilon/2}\\
&= \mu
 \ens{\frac 1{\sqrt{\itlog{m_j} } }\abs{g}>\varepsilon/2}+\\
& +\mu\ens{\max_{0\leqslant i\leqslant \left[j^\alpha\right]}
 \frac 1{\sqrt{\itlog{m_j} } }\abs{g\circ T^i-g}>\varepsilon/2},
\end{align*}
from which we infer 
\begin{multline}\label{estimate_pj_LIL}
p_j \leqslant \mu
 \ens{\frac 1{\sqrt{\itlog{m_j} } }\abs{g}>\varepsilon/2}+\\
 +\mu\ens{\frac 1{\sqrt{\itlog{m_j} } }M^*(g-g\circ T) >
 \frac{\varepsilon}{2\left[j^\alpha\right]} } .
\end{multline}
 \begin{enumerate}[label=(\roman*)]
  \item Assume that $p>r /(r-1)$. Using the definition of $\norm{\cdot}_{p,\infty}$ and 
  inequality \eqref{bound_m_j}, we obtain 
\begin{equation}\label{estimate_pj_LIL2}
  \mu\ens{\frac 1{\sqrt{\itlog{m_j} } }\abs{g}>\varepsilon/2}
 \leqslant c(p,\varepsilon,\alpha)\kappa^{p /2} \norm g_{p,\infty}^p
 \frac 1{j^{(\alpha+1)p/2}  },
 \end{equation}
where $c(p,\varepsilon,\alpha)$ is independent of $j$.
 Using \eqref{bound_m_j} and \eqref{estimate_maximal_function}, we derive 
 \begin{multline}\label{estimate_pj_LIL3}
  \mu\ens{
 \frac 1{\sqrt{\itlog{m_j} } }M^*(g-g\circ T) >
 \frac{\varepsilon}{2\left[j^\alpha\right]} }\leqslant\\ 
 \leqslant c(r,\varepsilon,\alpha) \kappa^{r/2} \norm{g-g\circ T}_{r,\infty}^r
 j^{\alpha r-(\alpha +1)r /2}, 
 \end{multline}
 where $c(r,\varepsilon,\alpha)$ is independent of $j$.
 
 Combining \eqref{estimate_pj_LIL}, \eqref{estimate_pj_LIL2} and 
 \eqref{estimate_pj_LIL3}, we deduce the upper bound 
 \begin{equation}\label{estimate_pj_LIL4} 
  p_j\leqslant c(p,r,\alpha,\varepsilon,g)
  \left(\frac 1{j^{(\alpha+1)p/2}}  + \frac 1{j^{(1-\alpha)r/2}} \right).
  \end{equation}
We have to take $\alpha$ such that 
\begin{equation}\label{choice_of_alpha} 
 (\alpha+1)p/2 >1\mbox{ and } (1-\alpha)r/2>1.
\end{equation}
 This is equivalent to 
 \begin{equation}
  \alpha >2/p-1\mbox{ and }\alpha<1-2/r. 
 \end{equation}
 Since $2/p-1 <1-2/r$, inequalities \eqref{choice_of_alpha} are 
 satisfied and in view of \eqref{estimate_pj_LIL4}, the series 
 defined by \eqref{definition_pj} is convergent for any 
 positive $\varepsilon$. This conclude part 
\ref{strict_inequality} of Theorem~\ref{cond_suff_LIL}.
 
 \item Assume that $p=r /(r-1)$. We pick $\alpha:=2/p-1=1-2/r$. In this case, for some 
 constant $c$ depending only on $p$ and $r$, the inequality 
 \begin{align}
  p_j&\leqslant \mu\ens{\abs{g}> c\varepsilon j^{1/p}} +
  \mu\ens{M^*(g-g\circ T) >
 \varepsilon c j^{-\alpha+(\alpha +1)/2}  }\\ 
 &=\mu\ens{\abs{g}> c\varepsilon j^{1/p}} +
  \mu\ens{M^*(g-g\circ T) >
 \varepsilon c j^{(1-\alpha )/2}  }\\
 &=\mu\ens{\abs{g}> c\varepsilon j^{1/p}} +
  \mu\ens{M^*(g-g\circ T) >\varepsilon c j^{1/r}}
 \end{align}
 takes place.
 By item \ref{lq_lq_inequality} of Lemma~\ref{weak_lp_maximal_function}, 
 we conclude that the series defined by \eqref{definition_pj} is 
 convergent for any positive $\varepsilon$ and 
 this concludes part \ref{equality_case} of 
 Theorem~\ref{cond_suff_LIL}, hence 
 the proof of Theorem~\ref{cond_suff_LIL}.
 \end{enumerate}
\end{proof}

\begin{proof}[Proof of Corollary~\ref{cor:cond_suff_LIL}]
 Like in the proof of Corollary~\ref{cor:WIP}, we derive that 
 $f$ admits a martingale-coboundary decomposition with 
 $g$ in $\mathbb L^p$ and $m, g-g\circ T\in \mathbb L^{p/(p-1)}$. 
 We thus may apply Theorem~\ref{cond_suff_LIL} to conclude that $f$ 
 satisfies the functional law of the iteratd logarithms.
\end{proof}

 \begin{proof}[Proof of Theorem~\ref{cond_suff_LLN}]
  Let us fix a positive $\varepsilon$ and $\alpha\in 
  [1/p,1]$. Let $1\leqslant k<n$ be integers. By similar 
  inequalities which leaded to \eqref{bound_p_n} (we replace the 
  exponent $1/2$ by $\alpha$), we derive 
  \begin{multline}
   \mu\ens{\max_{1\leqslant j\leqslant n}\abs{g-g\circ T^j} >\varepsilon 
   n^\alpha}\leqslant 2\left(\left[\frac nk \right]+1\right)
   \mu \ens{\abs g>\varepsilon n^\alpha /4}  +\\ 
   + 2\left(\left[\frac nk \right]+1\right) 
  \mu\ens{M^*(g-g\circ T) 
  >\frac{\varepsilon n^ \alpha}{ 2k} }. 
  \end{multline}
 Let us choose $k:=\left[n^\beta\right]$, where 
 \begin{equation}\label{choice_of_beta_LLN}
  (p-q)\alpha\leqslant 
 \beta\leqslant  \alpha (r-p) /(r-1)
 \end{equation}
  (the existence of such a $\beta$ 
 is guaranted by the assumptions on $p$, $q$ and $r$). Then 
 it suffices to check that 
 for each positive constant $c$, the series 
 \begin{equation}
 \Sigma_1 :=\sum_{n=1}^{+\infty}n^{p\alpha-1-\beta}\mu\ens{\abs g \geqslant cn^\alpha } 
 \mbox{ and } 
 \end{equation}
 \begin{equation}\label{second_series}
 \Sigma_2 :=\sum_{n=1}^{+\infty}n^{p\alpha-1-\beta}\mu\ens{M^*(g-g\circ T)
 \geqslant cn^{\alpha-\beta}  }
 \end{equation}
  are convergent. The convergence of $\Sigma_1$ is equivalent to the 
  integrability of the function $\abs g^{p-\beta /\alpha}$; this holds 
  since \eqref{choice_of_beta_LLN} implies $q\geqslant p-\beta /\alpha$. 
  
  Note that the second series converges if 
  \begin{equation}
  \mathbb E\left[ \left(M^*(g-g\circ T)\right)^{\frac{p\alpha-\beta }{\alpha-\beta}}
  \right]<+\infty.
  \end{equation}
  Notice that inequality \eqref{choice_of_beta_LLN} implies that 
  $(p\alpha-\beta)/(\alpha-\beta)\leqslant r$, hence we derive
  the convergence of $S_2$
  by item~\ref{lq_lq_inequality} of  
  Lemma~\ref{weak_lp_maximal_function} (with the exponent $
  (p\alpha-\beta )/(\alpha-\beta)>1$ since $p>1$).
   
   This concludes the proof of Theorem~\ref{cond_suff_LLN}.  
 \end{proof}
 
\begin{proof}[Proof of Corollary~\ref{cor:BK}]
 Like in the proof of Corollaries~\ref{cor:WIP} and 
 \ref{cor:cond_suff_LIL}, we derive that $f$ admits a martingale-coboundary decomposition 
 with $g\in \mathbb L^{(p-1)r /(r-1)}$ and $m,g-g\circ T\in \mathbb L^r$. By Theorem~\ref{cond_suff_LLN}, 
 we derive that for each positive $\varepsilon$ and any $\alpha\in[1/p,1]$, \eqref{eq:BK_rewritten} 
 takes place.
\end{proof}

\subsection{Counter-examples}

\begin{proof}[Proof of Theorem~\ref{counter_example}]
We recall the construction given in the proof of Theorem~3 of 
\cite{MR1893125}. We choose a real number $\alpha$ such that 
\begin{equation}\label{choice_of_alpha_counter_example}
 \frac{r-2}{2(r-1)}   <\alpha< 1-\frac p2.
\end{equation}
 This is possible because 
 \begin{multline}
  1-\frac p2 -\frac{r-2}{2(r-1)}=
  \frac 12\left(2-p-\frac{r-1-1}{r-1}  \right) =\\
  =\frac 12\left(1-p+\frac 1{r-1} \right)=\frac 12\left(\frac r{r-1}-p \right)>0. 
 \end{multline}
For each $i\geqslant 1$, we define $n_i:=2^i$ and $k_i:=\left[2^{i\alpha} \right]$. 
By the Rokhlin lemma (see \cite{MR0014222,MR0024503}), one can find a set $A_i\in\mathcal F$ such that 
\begin{equation}\label{disjoint} 
 \mbox{sets }A_i, TA_i,\dots,T^{n_i-1}A_i\mbox{ are pairwise disjoint and }
\end{equation}
 \begin{equation}\label{rokhlin}  
  \mu\left(\bigcup_{j=0}^{n_i-1} T^jA_i  \right)>1/2.
 \end{equation}
In particular, the quantity $\mu(A_i)$ can be bounded as follows:
\begin{equation}\label{bound_mu_Ai}
 \frac 1{2n_i} \leqslant \mu(A_i)\leqslant \frac 1{n_i}.  
\end{equation}

We then define for $i\geqslant 1$, 
\begin{equation}\label{def_gi} 
 g_i:=\frac{\sqrt{ \itlog{n_i}} }{k_i}\left(\sum_{j=1}^{k_i}j\mathbf 1(T^{n_i-j}A_i )
 +\sum_{j=k_i+1}^{2k_i-1}(2k_i-j)\mathbf 1(T^{n_i-j}A_i )\right),   
\end{equation}
 and $g:=\sum_{i=i_0}^{ +\infty}g_i$, where $i_0$ is such that 
 $2k_i<n_i$ for each $i\geqslant i_0$.  By the Borel-Cantelli lemma, 
 since $\mu\ens{g_i\neq 0}\leqslant 2k_i/n_i\leqslant 2^{1-(1-\alpha)i}$, 
 the series which defines $g$ is almost surely convergent.

Since it has been shown in \cite{MR1893125} that the function $f$ 
satisfies neither the invariance principle nor the law of the iterated logarithm, 
it remains to prove that the constructed function $g$ belongs to $\mathbb L^p$ 
and that the coboundary $g-g\circ T$ belongs to $\mathbb L^r$.

By \eqref{disjoint} and \eqref{def_gi}, the equality 
\begin{equation}
 \abs{g_i}^p= \left(\frac{\sqrt{ \itlog{n_i} } }{k_i}\right)^p
 \left(\sum_{j=1}^{k_i}j^p\mathbf 1(T^{n_i-j}A_i )
 +\sum_{j=k_i+1}^{2k_i-1}(2k_i-j)^p\mathbf 1(T^{n_i-j}A_i )\right)
\end{equation}
 takes place, hence integrating and accounting \eqref{bound_mu_Ai}, 
 we derive the estimates
 \begin{align}
 \mathbb E\abs{g_i}^p&\leqslant  \left(\frac{\sqrt{ \itlog{n_i}  }}{k_i}\right)^p
 \left(\sum_{j=1}^{k_i}j^p+\sum_{j=k_i+1}^{2k_i-1}(2k_i-j)^p)\right)\frac 1{n_i}\\
 &\leqslant 2\frac{n_i^{p/2-1}(\log \log n_i)^{p/2}  }{k_i^p}k_i^{p+1}\\ 
 &=2n_i^{p/2-1}(\log \log n_i)^{p/2}k_i, 
\end{align}
hence 
\begin{equation}
 \norm{g_i}_p\leqslant 2^{1/p}n_i^{1/2-1/p}(\log \log n_i)^{1/2}k_i^{1/p}.   
\end{equation}
 By definition of $n_i$ and $k_i$, one can find a constant $c$ depending only 
 on $\alpha$ (hence on $p$ and $r$) such that for $i$ large enough, 
\begin{equation}\label{bound_norm_gi} 
 \norm{g_i}_p\leqslant c\cdot 2^{i(1/2-1/p)}(\log i)^{1/2} \cdot 2^{i\alpha/p}
 =c\cdot (\log i )^{1/2} \cdot 2^{i(\alpha-1+p/2)/p}. 
\end{equation}
By \eqref{choice_of_alpha_counter_example}, the series $\sum_{i=1}^{+\infty} 
(\log i )^{1/2} \cdot 2^{i(\alpha-1+p/2)/p}$ is convergent, and we conclude by 
\eqref{bound_norm_gi} that $g$ belongs to $\mathbb L^p$. 

It is proved in \cite{MR1893125} that by construction, the equality 
\begin{equation}
 \abs{g_i-g_i\circ T}=\frac{\sqrt{\itlog{n_i}} }{k_i}\cdot\mathbf 1
\left(\bigcup_{j=1}^{2k_i}T^{n_i-j}A_i \right)
\end{equation}
holds. By \eqref{disjoint} and \eqref{def_gi}, we have 
\begin{equation}
 \norm{g_i-g_i\circ T}_r  \leqslant \frac{\sqrt{\itlog{n_i}} }{k_i}
 \left(\frac{2k_i}{n_i}\right)^{1/r}, 
\end{equation}
 hence by the definition of $n_i$ and $k_i$, we have for $i$ large enough and 
 a constant $c$ depending only on $\alpha$,
 \begin{equation}
  \norm{g_i-g_i\circ T}_r  \leqslant  c\cdot  2^{i( 1/2-1/r) }2^{i\alpha(1/r-1)}
  (\log i)^{1/2},
 \end{equation}
  from which we infer (by \eqref{choice_of_alpha_counter_example}) 
  the convergence of the series 
  $\sum_{i=1}^{+\infty}\norm{g_i-g_i\circ T}_r$ hence the fact that 
  the function $g-g\circ T$ belongs to $\mathbb L^r$. 
 
 This concludes the proof of Theorem~\ref{counter_example}. 
\end{proof}
 
 \begin{proof}[Proof of Theorem~\ref{counter_example_LLN}]
  The construction is similar to that of the proof of Theorem~\ref{counter_example}. 
  
  For each $i\geqslant 1$, we define $n_i:=2^i$ and $k_i:=\left[2^{i\beta} \right]$, 
  where $\beta$ satisfies 
  \begin{equation}
   \frac{r-p}{p(r-1)}<\beta<1-\frac qp.  
  \end{equation}
 Such a choice is possible since 
 \begin{equation}
  p-q-\frac{r-p}{r-1}>p-(p-1)\frac r{r-1} -\frac{r-p}{r-1} 
  =\frac{p(r-1)-(p-1)r-r+p}{r-1}  =0.
 \end{equation}
 We take a set $A_i\in\mathcal F$ such that \eqref{disjoint} and 
 \eqref{rokhlin} hold. We then define for $i\geqslant 1$, 
\begin{equation}\label{def_gi_LLN} 
 g_i:=\frac{ n_i^{1/p} }{k_i}\left(\sum_{j=1}^{k_i}j\mathbf 1(T^{n_i-j}A_i )
 +\sum_{j=k_i+1}^{2k_i-1}(2k_i-j)\mathbf 1(T^{n_i-j}A_i )\right),   
\end{equation}
 and $g:=\sum_{i=i_0}^{ +\infty}g_i$,  where $i_0$ is such that 
 $2k_i<n_i$ for each $i\geqslant i_0$.
 
 The proof will be complete if we show the following 
 three assertions:
 \begin{enumerate}
  \item the function $g$ belongs to $\mathbb L^q$;
  \item the function $g-g\circ T$ belongs to $\mathbb L^r$;
  \item the sequence $(n^{-1/p} g\circ T^n)_{n\geqslant 1}$ does 
  not converge almost surely to $0$.
 \end{enumerate}
 The first two items follow by completely similar computations as in the 
 proof of Theorem~\ref{counter_example}. 
 To show the last item, we notice that the sequence 
 $(2^{-i/p}\max_{2^i\leqslant l\leqslant 2^{i+1} }
 g\circ T^l)_{i\geqslant 1}$ does not converge to $0$ in 
 probability. To see this, one can note that 
 \begin{multline}\label{eq:proof_CE2_last_equation}
  \mu\ens{2^{-i/p}\max_{2^i\leqslant l\leqslant 2^{i+1} }
 g\circ T^l \geqslant 1}\geqslant \\\mu\ens{2^{-i/p}\max_{2^i\leqslant l\leqslant 2^{i+1} }
 g_i\circ T^l \geqslant 1}\geqslant\mu\left(\bigcup_{j=1}^{n_i-k_i}T^j(A_i)  \right)
 \geqslant \frac 12.
 \end{multline}
 Indeed, since $g_j$ is non-negative, we have $g_i\leqslant g$, which 
 gives the first inequality of \eqref{eq:proof_CE2_last_equation}. 
 For the second, notice that since $n_i=2^i$, 
 \begin{align}
  \ens{2^{-i/p}\max_{0\leqslant l\leqslant 2^{i} }
 g_i\circ T^l \geqslant 1}&=
 \ens{\max_{0\leqslant l\leqslant 2^i}\sum_{j=1}^{k_i}\frac j{k_i}
 \mathbf 1\left(T^{n_i-j}A_i\right)\circ T^l\geqslant 1
 } \\
 &\supset \bigcup_{l=0}^{2^i}\ens{\omega\mid T^l\omega \in T^{n_i-k_i}A_i}
 \supset \bigcup_{j=1}^{n_i-k_i}T^j(A_i).
 \end{align}

 This finishes the proof of Theorem~\ref{counter_example_LLN}.  
 
 \end{proof}
 
\subsection{Applications}

For the context, we refer the reader to Subsection~\ref{sec:applications}.
We define $\mathcal M:=\sigma(\varepsilon_i,i\leqslant 0)$.

The proof of Propositions~\ref{propo:application_WIP_LIL} and 
\ref{propo:application_LLN} will follow from Corollaries~\ref{cor:WIP}, \ref{cor:cond_suff_LIL} 
and \ref{cor:BK} and the following intermediate step. 

\begin{lem}
 Let $q>1$. Then for each centered $f\colon [0,1]\to \R$ and each $n\geqslant 1$, 
 the following inequalities hold:
 \begin{equation}\label{eq:norm_EXn}
  \norm{\mathbb E\left[f\circ T^n\mid \mathcal M\right]}_q^q 
  \leqslant 2^n\int_0^1\int_0^1\mathbf 1\ens{\abs{x-y}\leqslant 2^{-n}}
  \abs{f(x)-f(y)}^p\mathrm dx\mathrm dy
 \end{equation}
\begin{multline}\label{eq:norm_I_U_Xn}
  \norm{(I-U)\mathbb E\left[f\circ T^n\mid \mathcal M\right]}_q^q
  \leqslant\\ 2\int_0^{1}\int_0^1\mathbf 1\ens{\abs{x-y}\leqslant 2^{-n}}
  \abs{\widetilde{f}(x)-\widetilde{f}(y)}^q
  \mathrm dx\mathrm dy<\infty,
 \end{multline}
 where $\widetilde{f}(x)=f(x)-f(x/2)/2-f((x+1)/2)/2$.
\end{lem}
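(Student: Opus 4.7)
The plan is to use the explicit structure of the Bernoulli shift: conditioning on the past $\mathcal M$ amounts to averaging over the $n$ new bits that index the $2^n$ preimages of a point under $T^n$, so that $h_n:=\mathbb E[f\circ T^n\mid\mathcal M]$ coincides with $P^n f$, where $P$ is the Perron--Frobenius operator of the doubling map, $Pg(x)=\tfrac12(g(x/2)+g((x+1)/2))$. Iterating gives the explicit representation $h_n(x)=2^{-n}\sum_{j=0}^{2^n-1}f((j+x)/2^n)$, with the evaluation points lying in the disjoint dyadic intervals $I_j=[j/2^n,(j+1)/2^n)$ of length $2^{-n}$.

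For the first inequality, since $f$ is centered so is $h_n$, so I would write $h_n(x)=\int_0^1(h_n(x)-h_n(y))\,dy$ and apply Jensen's inequality twice---first over $y$, then over the $2^n$ summands---to obtain $|h_n(x)|^q\le 2^{-n}\sum_j\int_0^1|f((j+x)/2^n)-f((j+y)/2^n)|^q\,dy$. Integrating in $x$ and making the change of variables $u=(j+x)/2^n$, $v=(j+y)/2^n$ on each $I_j$ (with Jacobian $dx\,dy=2^{2n}\,du\,dv$) produces $\|h_n\|_q^q\le 2^n\sum_j\iint_{I_j\times I_j}|f(u)-f(v)|^q\,du\,dv$, which gives \eqref{eq:norm_EXn} because $u,v\in I_j$ forces $|u-v|\le 2^{-n}$.

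For the second inequality, the key point is that $Uh_n(\omega)=h_n(T\omega)$ depends only on one new bit $\omega_1$ revealed by the shift, while $x/2$ and $(x+1)/2$ are the two preimages of $x$ under the doubling map. Integrating out $\omega_1$ gives $\|(I-U)h_n\|_q^q=\tfrac12\int_0^1|h_n(x)-h_n(x/2)|^q\,dx+\tfrac12\int_0^1|h_n(x)-h_n((x+1)/2)|^q\,dx$; the constant $2$ in the target estimate will emerge from combining these two preimage contributions with the subsequent change of variables. To introduce $\widetilde f$, I would use $h_n=Ph_{n-1}$ together with the identity $(I-P)h_{n-1}=P^{n-1}\widetilde f$ and telescope, writing each pointwise difference $h_n(x)-h_n(x/2)$ as a series $\sum_{j\ge n}[P^j\widetilde f(x)-P^j\widetilde f(x/2)]$ whose terms are averages of $\widetilde f$ at pairs of points within distance $2^{-j-1}\le 2^{-n}$. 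Applying the same Jensen-plus-change-of-variables scheme as in the first part, now to $\widetilde f$, then yields \eqref{eq:norm_I_U_Xn}.

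The main obstacle is this second step: a naive application of the first inequality to $P^n\widetilde f$ alone would only yield the coefficient $2^n$, so the improvement to the constant $2$ must come from the cancellation inherent to the operator $I-U$ when evaluated at a pair of preimages, which is precisely what the telescoping exploits. Carefully controlling the tail of the telescoping series and ensuring that all the pairs $(u,v)$ that appear after the change of variables satisfy $|u-v|\le 2^{-n}$ is the core technical point.
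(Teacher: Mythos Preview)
Your treatment of the first inequality is correct and coincides with the paper's argument: write $h_n(x)=2^{-n}\sum_{j}f((x+j)/2^n)$, subtract the mean, apply Jensen in $y$ and in $j$, and change variables on each dyadic block.

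For the second inequality you have been misled by what is almost certainly a typo in the stated constant. The paper's own proof is a single line: it asserts the representation
\[
(I-U)\,\mathbb E\!\left[f\circ T^n\mid\mathcal M\right](x)
=2^{-n}\sum_{j=0}^{2^n-1}\int_0^1\Bigl[\widetilde f\!\left(\tfrac{x+j}{2^n}\right)-\widetilde f\!\left(\tfrac{y+j}{2^n}\right)\Bigr]\mathrm dy,
\]
i.e.\ the same formula as for $h_n$ with $f$ replaced by $\widetilde f=(I-P)f$, and then says ``we prove \eqref{eq:norm_I_U_Xn} in a similar way''. That argument yields the coefficient $2^n$, exactly as you observed. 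Consistently, the subsequent Lemma and Propositions~\ref{propo:application_WIP_LIL}--\ref{propo:application_LLN} impose on $\widetilde f$ the \emph{same} weighted condition (with the factor $|x-y|^{-1}(\log(1/|x-y|))^{\cdots}$) as on $f$, which is precisely what the $2^n$ bound requires; a genuine constant $2$ would make the hypothesis on $\widetilde f$ far weaker than what is actually assumed. So the target you are trying to hit is $2^n$, and the ``naive'' route you dismissed is the intended one.

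Your telescoping attempt, besides being unnecessary, does not close as written. The identity $h_n(x)-h_n(x/2)=\sum_{j\geqslant n}\bigl[P^j\widetilde f(x)-P^j\widetilde f(x/2)\bigr]$ is fine, but after Jensen and change of variables each summand contributes an integral over $\{|u-v|\leqslant 2^{-j}\}$; these are nested in $j$, yet with the exponent $q$ outside there is no mechanism to collapse the infinite sum to a fixed multiple of the $j=n$ term. The ``cancellation inherent to $I-U$'' you invoke does not by itself produce such a collapse.
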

\begin{proof}
Following \cite{MR1782272}, we have for each $x\in [0,1]$, 
\begin{equation}
 \mathbb E\left[f\circ T^n\mid \mathcal M\right](x)
 =2^{-n}\sum_{j=0}^{2^n-1}\int_0^1\left[f\left(\frac{x+j}{2^n}\right)-
 f\left(\frac{y+j}{2^n}\right)\right]\mathrm dy,
\end{equation}
hence by Jensen's inequality, 
\begin{equation}
 \norm{\mathbb E\left[f\circ T^n\mid \mathcal M\right]}_q^q\leqslant 
 2^n\int_0^1\int_0^1\mathbf 1\ens{\abs{x-y}\leqslant 2^{-n}}\abs{f(x)-f(y)}^q
 \mathrm dx\mathrm dy.
\end{equation}

Since 
\begin{equation}
 (I-U)\mathbb E\left[f\circ T^n\mid \mathcal M\right](x)
 =2^{-n}\sum_{j=0}^{2^n-1}\int_0^1\left[\widetilde{f}\left(\frac{x+j}{2^n}\right)-
 \widetilde{f}\left(\frac{y+j}{2^n}\right)\right]\mathrm dy,
\end{equation}
 we prove 
\eqref{eq:norm_I_U_Xn} in a similar way.
\end{proof}

\begin{lem}\label{lem:sufficient_cond_Bernoulli}
 Let $q>1$. If for some positive $\delta$, we have 
 \begin{equation}
  \int_0^{1}\int_0^1\frac{\abs{f(x)-f(y)}^q}{\abs{x-y}}
  \left(\log \frac 1{\abs{x-y}}\right)^{q-1+\delta}\mathrm dx\mathrm dy<\infty,
 \end{equation}
then the series $\sum_{n\geqslant 1}\norm{\mathbb E\left[f\circ T^n\mid \mathcal M\right]}_q$ 
converges.
\end{lem}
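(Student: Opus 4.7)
The plan is to combine the $L^q$-bound on conditional expectations from the previous lemma with a dyadic decomposition of the assumed integral, then apply Hölder's inequality to sum the resulting series. First I would reduce the problem: by \eqref{eq:norm_EXn}, it suffices to prove that
\[
 \sum_{n\geqslant 1}\pa{2^n a_n}^{1/q}<\infty,\qquad
 a_n:=\int_0^1\int_0^1 \mathbf 1\ens{\abs{x-y}\leqslant 2^{-n}}\abs{f(x)-f(y)}^q \dd x\dd y.
\]

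Next I would decompose $[0,1]^2\setminus\{x=y\}$ into the dyadic shells $B_k:=\ens{(x,y)\in[0,1]^2:2^{-k-1}<\abs{x-y}\leqslant 2^{-k}}$, and set $\beta_k:=\int_{B_k}\abs{f(x)-f(y)}^q\dd x\dd y$, so that $a_n=\sum_{k\geqslant n}\beta_k$. Since $t\mapsto t^{1/q}$ is subadditive for $q>1$, this gives $a_n^{1/q}\leqslant \sum_{k\geqslant n}\beta_k^{1/q}$, and switching the order of summation,
\[
 \sum_{n\geqslant 1}2^{n/q}a_n^{1/q}
 \leqslant \sum_{k\geqslant 1}\beta_k^{1/q}\sum_{n=1}^{k}2^{n/q}
 \leqslant C_q\sum_{k\geqslant 1}\pa{2^k\beta_k}^{1/q}.
\]

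Finally, on $B_k$ one has $\abs{x-y}^{-1}\geqslant 2^k$ and $\log(1/\abs{x-y})\geqslant k\log 2$, so the standing hypothesis implies
\[
 \sum_{k\geqslant 1}\pa{2^k\beta_k}\cdot k^{q-1+\delta}\leqslant \frac{1}{(\log 2)^{q-1+\delta}}\int_0^1\int_0^1\frac{\abs{f(x)-f(y)}^q}{\abs{x-y}}\pa{\log\frac 1{\abs{x-y}}}^{q-1+\delta}\dd x\dd y<\infty.
\]
Writing $\pa{2^k\beta_k}^{1/q}=\pa{2^k\beta_k\cdot k^{q-1+\delta}}^{1/q}\cdot k^{-(q-1+\delta)/q}$ and applying Hölder with exponents $q$ and $q/(q-1)$ yields
\[
 \sum_{k\geqslant 1}\pa{2^k\beta_k}^{1/q}\leqslant\pa{\sum_{k\geqslant 1}2^k\beta_k\, k^{q-1+\delta}}^{1/q}\pa{\sum_{k\geqslant 1}k^{-(q-1+\delta)/(q-1)}}^{(q-1)/q},
\]
and the second factor converges precisely because $(q-1+\delta)/(q-1)=1+\delta/(q-1)>1$.

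The only genuinely delicate step is the subadditivity move $a_n^{1/q}\leqslant \sum_{k\geqslant n}\beta_k^{1/q}$ followed by the Fubini swap: without this, a naive bound of $a_n$ by $2^{-n} n^{-(q-1+\delta)}$ times a constant would force $\delta>1$. By instead pushing the weight $2^n$ inside the $\beta_k$-sum and only then combining with the logarithmic weight via Hölder, the full strength of the weight $(\log 1/\abs{x-y})^{q-1+\delta}$ is used and any $\delta>0$ suffices, which is exactly what the corollaries of Subsection~\ref{sec:sufficient_conditions} require.
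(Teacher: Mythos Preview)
Your proof is correct. The route is a mild variant of the paper's: after the same reduction via \eqref{eq:norm_EXn}, the paper applies H\"older's inequality \emph{directly} to $\sum_{n}(2^{n}a_{n})^{1/q}$ with the same weight $n^{(q-1+\delta)/q}$, obtaining
\[
\sum_{n\geqslant 1}(2^{n}a_{n})^{1/q}\leqslant\Bigl(\sum_{n\geqslant 1}n^{q-1+\delta}\,2^{n}a_{n}\Bigr)^{1/q}\Bigl(\sum_{n\geqslant 1}n^{-(q-1+\delta)/(q-1)}\Bigr)^{(q-1)/q},
\]
and then swaps the sum over $n$ with the double integral, bounding the inner sum $\sum_{n=1}^{\log_{2}(1/t)}n^{q-1+\delta}2^{n}$ pointwise by $2t^{-1}(\log_{2}(1/t))^{q-1+\delta}$ for $t=\abs{x-y}$. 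This bypasses both the dyadic shells $B_{k}$ and the subadditivity step $a_{n}^{1/q}\leqslant\sum_{k\geqslant n}\beta_{k}^{1/q}$. Your discretization makes the scale structure explicit and arrives at the identical H\"older estimate on $\sum_{k}(2^{k}\beta_{k})^{1/q}$; the paper's version is one step shorter because it delays H\"older and never needs the concavity of $t\mapsto t^{1/q}$.
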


A similar sufficient condition can be stated for 
the convergence of the 
series $\sum_{n\geqslant 1}\norm{(I-U)\mathbb E\left[f\circ T^n\mid \mathcal M\right]}_q$.

\begin{proof}[Proof of Lemma~\ref{lem:sufficient_cond_Bernoulli}]
 In view of inequality \eqref{eq:norm_EXn}, we have to prove that 
 \begin{equation}
  \sum_{n=1}^{+\infty}\left(2^n\int_0^1\int_0^1\mathbf 1\ens{\abs{x-y}\leqslant 2^{-n}}\abs{f(x)-f(y)}^q
 \mathrm dx\mathrm dy\right)^{1/q}<+\infty.
 \end{equation}
To this aim, we define $\beta:=(q-1+\delta)/q$ and bound, by Hölder's inequality, 
\begin{equation*}
 \left(\sum_{n=1}^{+\infty}n^{\beta q}
 2^n\int_0^1\int_0^1\mathbf 1\ens{\abs{x-y}\leqslant 2^{-n}}\abs{f(x)-f(y)}^q
 \mathrm dx\mathrm dy\right)\cdot \left(\sum_{n=1}^{+\infty}n^{-\beta q/(q-1)}\right)^{1-1/q}<+\infty
\end{equation*}
and since $\beta q/(q-1)=(q-1+\delta)/(q-1)>1$, it suffices to show that the series 
$\sum_{n=1}^{+\infty}n^{\beta q}
 2^n\int_0^1\int_0^1\mathbf 1\ens{\abs{x-y}\leqslant 2^{-n}}\abs{f(x)-f(y)}^q
 \mathrm dx\mathrm dy$ converges. 
For a fixed $t\in [0,1]$, we have 
\begin{equation}
 \sum_{n=1}^{+\infty }n^{\beta q}
 2^n\mathbf 1\ens{t\leqslant 2^{-n}}=
 \sum_{n=1}^{\log_2(1/t)}n^{\beta q}
 2^n\leqslant \frac 2t\left(\log_2(1/t)\right)^{\beta q},
\end{equation}
from which the convergence of 
$\sum_{n\geqslant 1}\norm{\mathbb E\left[f\circ T^n\mid \mathcal M\right]}_q$ follows.
\end{proof}

\textbf{Acknowledgements.} The author would like to thank anonymous referees 
for their comments, which led to an improvement  of the presentation of the paper.

\def\polhk\#1{\setbox0=\hbox{\#1}{{\o}oalign{\hidewidth
  \lower1.5ex\hbox{`}\hidewidth\crcr\unhbox0}}}\def\rasp{\leavevmode\raise.45ex\hbox{$\rhook$}}
  \def\cftil#1{\ifmmode\setbox7\hbox{$\accent"5E#1$}\else
  \setbox7\hbox{\accent"5E#1}\penalty 10000\relax\fi\raise 1\ht7
  \hbox{\lower1.15ex\hbox to 1\wd7{\hss\accent"7E\hss}}\penalty 10000
  \hskip-1\wd7\penalty 10000\box7} \def\cprime{$'$}
\providecommand{\bysame}{\leavevmode\hbox to3em{\hrulefill}\thinspace}
\providecommand{\MR}{\relax\ifhmode\unskip\space\fi MR }
% \MRhref is called by the amsart/book/proc definition of \MR.
\providecommand{\MRhref}[2]{%
  \href{http://www.ams.org/mathscinet-getitem?mr=#1}{#2}
}
\providecommand{\href}[2]{#2}

\end{document}